\theoremstyle{plain}
\newtheorem{thm}{Theorem}[section]
\newtheorem{pro}[thm]{Proposition}
\newtheorem{cor}[thm]{Corollary}
\newtheorem{lem}[thm]{Lemma}
\newtheorem{fact}[thm]{Fact}
\newtheorem{claim}[thm]{Claim}
\DeclareMathOperator{\End}{{\sf End}} 
\DeclareMathOperator{\im}{{\sf im}}  
\DeclareMathOperator{\Lat}{{\sf L}}
\newcommand{\skv}[2]{ \langle #1 \mid  #2 \rangle}
\begin{document}

\title[Representations of $*$-regular rings]{Representations
 of $*$-regular rings and their ortholattices of projections}

\subjclass[2000]{06C20; 16E50, 16W10.}
\keywords{orthocomplemented
modular lattice, $*$-regular ring, frame, inner prodcut space,
 representation}

\author[C.~Herrmann]{Christian Herrmann}
\author[N.~Niemann]{Niklas Niemann}
\address[C.~Herrmann]{TUD FB4\\Schlo{\ss}gartenstr. 7\\64289 Darmstadt\\Germany}\email{herrmann@mathematik.tu-darmstadt.de}

\begin{abstract}
We show that a subdirectly irreducible $*$-regular ring
admits a representation within some inner product space
provided so does its ortholattice of projections.
\end{abstract}

\maketitle

\section{Introduction}
The motivating examples of $*$-regular rings, due to Murray and
von Neumann, were the $*$-rings
of unbounded operators affiliated with  finite von Neumann algebra factors;
to be subsumed, later, as  $*$-rings of quotients
of finite Rickart $C^*$-algebras. All the latter have been
shown to be $*$-regular and unit-regular (Handelman \cite{hand2}).
Representations of these  as $*$-rings of endomorphisms
of suitable inner product spaces have been
obtained first, in the von Neumann case, by Luca Giudici
(cf. \cite{neu}), in general  in
 joint work with Marina Semenova \cite{awalg}.

The principal right ideals of a $*$-regular ring $R$
form a modular ortholattice $\Lat^\perp(R)$, 
also to be viewed as the ortholattice of projections of $R$.
As observed by Giudici \cite{luca},
any representation of $R$ induces one of $\Lat^\perp(R)$.
Here, a representation of an ortholattice $L$
in an inner product space $V$ means
an embedding $\eta$ of $L$ into the lattice of all linear
subspaces of $V$ such that, for any $u\in L$,
 $\eta(u^\perp)$
is the orthogonal of $\eta(u)$. In his thesis \cite{nik},
the second author established the converse
for subdirectly irreducible $R$ (cf. \cite{nikk}).
This involved a coordinatization of 
representable ortholattices in terms of a variant,
including orthogonality conditions,
of J\'{o}nsson's large partial frames \cite{jon1}.
The purpose of the present note is to give
a short presentation to the result,
relying on the review of Coordinatization Theory 
given in \cite{rep} and the fact
that every variety of $*$-regular rings is generated
by its simple members \cite{simp}.

As general refences we use 
\cite{good, berb, berb2} for regular and $*$-regular rings,
\cite{cd} for lattices, \cite{neu,mae,sko}
for coordinatization, and \cite{gross} for inner product spaces. 
Thanks are due to the referee for a lot of helpful suggestions,
in particular correcting the proof of the crucial Claims
5.4 and 5.6.

\section{Regular rings and vector space representations}

Unless stated otherwise,
rings will be associative,
with  unit $1$ as constant
(constants in the signature have to be preserved
under homomorphisms and in substructures). 
A (von Neumann)  \emph{regular} ring $R$ is  such that for 
each $a \in R$ there is $x \in R$ such that $axa=a$;
equivalently, every right (left) 
principal ideal is generated by an idempotent.

A \emph{representation} of a ring $R$ 
within a vector space $V$ is an
embedding of $R$ into the endomorphism ring $\End(V)$ of $V$. 
It appears to be well known that
every subdirectly irreducible regular 
ring $R$ admits some representation.
Indeed,
each maximal left ideal $M_i$ of $R$
gives rise to a homorphism $\varphi_i:R\to \End(V_i)$,
$\varphi_i(r)(a+M_i)=ra+M_i$;
here  $V_i$ is the (right) vector space 
over the division ring of endomorphisms of the simple left $R$-module  $R/M_i$.
These homomorphisms $\varphi_i$ yield a subdirect representation of $R$
since $\bigcap_i M_i=0$ (for $r\neq 0$ and idempotent $e$ 
with $Rr=Re$ choose $M_i$ such that $1-e\in M_i$ to obtain $r \not\in M_i=\ker \varphi_i$). 
On the other hand, examples of non-representable regular rings 
are obtained as products of matrix rings over fields
of different characteristics.

\medskip
We consider lattices $L$ with  bounds $0,1$ as constants. 
We use $+$ and $\cap$
to denote joins and meets and write $a \oplus b=c$
if $a+b=c$ and $a \cap b=c$.
$L$ is \emph{complemented} if for any $a$ there is $b$
such that $a \oplus b=1$.

The principal right ideals of a regular ring $R$
form a  complemented modular lattice $\Lat(R)$,
a sublattice of the lattice of all right ideals.
A \emph{representation} of a lattice $L$ within a vector space $V$
is an embedding of $L$ into the lattice $\Lat(V)$
of linear subspaces of  $V$.
The following is due to Luca Giudici, proof of (1) in
\cite[Theorem. 4.2.1]{luca}, cf. \cite[Proposition 10.1]{rep}.
\begin{fact}\label{f1} 
If $\iota$ is a representation of the regular ring $R$ 
in the vector space $V$, then
$\eta(aR) =\im \iota(a)$, $a \in R$,
defines  a representation of $\Lat(R)$ in $V$. 
\end{fact}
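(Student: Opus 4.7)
The plan is to verify that $\eta$ is a well-defined lattice embedding from $\Lat(R)$ into $\Lat(V)$. This breaks into order preservation and reflection (which yield well-definedness and injectivity), preservation of the bounds, and preservation of the lattice operations.

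Order preservation is immediate: if $aR \subseteq bR$, then $a = bs$ for some $s \in R$, hence $\iota(a) = \iota(b)\iota(s)$ and $\im\iota(a) \subseteq \im\iota(b)$; in particular $aR = bR$ gives $\im\iota(a) = \im\iota(b)$, so $\eta$ is well-defined. For order reflection I would pass to idempotent generators $eR = aR$ and $fR = bR$: the containment $\im\iota(e) \subseteq \im\iota(f)$ forces $\iota(f)$ to fix every element of $\im\iota(e)$ (since $\iota(f)$ is idempotent with image $\im\iota(f)$), so $\iota(fe) = \iota(e)$; injectivity of $\iota$ then yields $fe = e$, whence $e \in fR$ and $aR \subseteq bR$. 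The bounds go through at once: $\eta(0) = 0$ and $\eta(R) = \im\iota(1) = V$.

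For joins, write $aR + bR = cR$ (possible since $R$ is regular): any expression $c = ax + by$ shows $\im\iota(c) \subseteq \im\iota(a) + \im\iota(b)$, and the containments $a, b \in cR$ give the reverse inclusion.

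The main obstacle is meet preservation, i.e.\ $(aR \cap bR)V = \im\iota(a) \cap \im\iota(b)$ when $V$ is viewed as a left $R$-module via $\iota$. My approach exploits that in a regular ring every finitely generated right ideal is a direct summand of $R$, hence projective and flat. In particular the Mayer--Vietoris sequence
\[
0 \to aR \cap bR \xrightarrow{z \mapsto (z,z)} aR \oplus bR \xrightarrow{(x,y) \mapsto x-y} aR + bR \to 0
\]
splits because $aR + bR$ is projective, and therefore applying $-\otimes_R V$ produces a split short exact sequence. Flatness of each $xR$ makes the natural map $xR \otimes_R V \to V$ injective with image $\im\iota(x)$; identifying each tensor product with its image in $V$ and computing the kernel of the middle map, one obtains $(aR \cap bR)V = \im\iota(a) \cap \im\iota(b)$, which upon writing $aR \cap bR = gR$ for an idempotent $g$ is exactly $\eta(aR \cap bR) = \eta(aR) \cap \eta(bR)$.
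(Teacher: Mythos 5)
Your argument is correct, but note that the paper itself offers no proof of this Fact: it cites Giudici's thesis and Proposition 10.1 of the review \cite{rep}, where the argument is elementary and ring-theoretic. Your treatment of the easy parts (order preservation, order reflection via idempotent generators, bounds, joins) coincides with the standard one. Where you genuinely diverge is meet preservation. The cited proofs stay inside the ring: writing $eR = gR \oplus hR$ with $gR = eR \cap fR$, one gets $hR \cap fR = 0$, replaces $h,f$ by \emph{orthogonal} idempotents $h',f'$ generating the same right ideals, concludes $\im \iota(h') \cap \im \iota(f') = 0$ directly from $\iota(h'f')=0$, and then a short modularity computation in $\Lat(V)$ gives $\im\iota(e)\cap\im\iota(f)\subseteq\im\iota(g)$. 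Your route instead tensors the split Mayer--Vietoris sequence with $V$ regarded as a left $R$-module via $\iota$; this is in effect the flatness criterion $(I\cap J)V = IV\cap JV$, available here because every module over a regular ring is flat (equivalently, because all the f.g.\ right ideals involved are direct summands of $R$). Both arguments are sound; yours is quicker to state given the homological vocabulary, the paper's is self-contained at the level of idempotent manipulation. One small imprecision in your write-up: injectivity of the natural map $xR \otimes_R V \to V$ does not follow from flatness of $xR$ itself, but from the fact that the inclusion $xR \hookrightarrow R$ is a split monomorphism (equivalently, from flatness of $V$ as a left $R$-module); since you have already observed that $xR$ is a direct summand of $R$, this is only a matter of wording, not a gap.
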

The purpose of this section
is to relate representations the other way round
making use of coordinatization results
due von Neumann and
J\'{o}nsson, cf. \cite{rep}.
A \emph{coordinatization}
of a lattice $L$ is
an isomorphism
onto $\Lat(R)$, $R$ a regular ring.
Such are based on 
 ''frames'': suitable coordinate systems.
We write $a \sim_c b$ if $a+ b=a\oplus c=b\oplus c$
and $a\sim b$ if $a \sim_c b$ for some $c$.
Recall that, for modular $L$, $a\sim b$ and $a'\leq a$ 
implies $a'\sim b'$ for some $b'\leq b$.
Following J\'{o}nsson~\cite{jon1}
a \emph{large partial $n$-frame} $\Phi$ of $L$
is given 
by elements of $L$:  $a_i=a_{ii}$  $(0\leq i <m)$,
and $a_{0i}$, $0<i<m$, where $m \geq n$,
such that $1=\sum_{i=0}^{m-1} a_i$, $a_0\neq 0$,
 $\sum_{i=0}^{n-1} a_i= \bigoplus_{i=0}^{n-1} a_i$,
  and $a_i\sim_{a_{0i}} b_i$ for some $b_i \leq a_0$ for $0<i<m$.
 Moreover, for $0<i<n$ one requires
$b_i=a_0$.
$\Phi$ is a  {\em skew $n$-$m$-frame} if, in addition, 
$1= \bigoplus_{i=0}^{m-1} a_i$.
 Observe that, given such $\Phi$, $m'\leq m$,
$n'\leq n$, and $n'\leq m'$, the $a_i, a_{0i}$
with $i<m'$ form a
skew $n'$-$m'$-frame  in the interval
$[0,\sum_{i=0}^{m'-1} a_i]$. $\Phi$ is a \emph{skew $n$-frame}
if it is a skew $n$-$m$-frame for some $m$. 
From \cite[Theorem 1.7]{jon1} and \cite[Proposition 6.2]{rep}
one obtains the following
  
\begin{fact}\label{f2}
Every simple complemented modular lattice
of height at least $n$ admits some
large partial $n$-frame.
Every complemented modular lattice  
admitting a large partial $n$-frame also admits
a skew $n$-frame.
\end{fact}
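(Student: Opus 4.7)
I would prove the two assertions separately. For the first, I would begin by invoking that in every complemented modular lattice of height $\geq n$ there exist $n$ independent non-zero elements $x_0,\ldots,x_{n-1}$ (obtained from a maximal chain of length $\geq n$ by relatively complementing each covering step). The heart of the argument is the following consequence of simplicity: for any two non-zero $x,y\in L$ there exist non-zero $x'\leq x$ and $y'\leq y$ with $x'\sim y'$. (The congruence generated by any non-trivial pair is the universal congruence; a projectivity chain between non-zero elements can then be compressed to a single perspectivity between sub-elements using the parallelogram law in modular lattices.) Applied iteratively to the pairs $(x_0,x_1),(x_0,x_2),\ldots$, this yields non-zero $a_i\leq x_i$ that are all perspective to a common $a_0$, with axes $a_{0i}$ witnessing $a_0+a_i=a_0\oplus a_{0i}=a_i\oplus a_{0i}$, so that $b_i=a_0$ for $0<i<n$. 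To extend up to $1$ I would take any relative complement $c$ of $\bigoplus_{i<n}a_i$ in $1$ and decompose it as $\sum_{i=n}^{m-1}a_i$, refining if necessary so that each $a_i$ ($i\geq n$) is perspective to some $b_i\leq a_0$; the same simplicity argument gives such a $b_i$ whenever $a_0$ has been chosen of sufficiently ``large dimension type'' (which one can always enforce by replacing $a_0$, together with its companions, by a common perspective copy of maximal size available).

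For the second assertion, suppose a large partial $n$-frame $(a_i,a_{0i},b_i)_{i<m}$ is given. The task is to replace the tail $a_n,\ldots,a_{m-1}$ by pieces that are independent among themselves and of $\bigoplus_{i<n}a_i$, while preserving all the axis and perspectivity data. I would proceed by induction on $k\geq n$: setting $a_i'=a_i$ for $i<n$ and $s_k=\bigoplus_{j<k}a_j'$, I choose $a_k'$ to be a relative complement of $a_k\cap s_k$ in the interval $[0,a_k]$; such a complement exists because every interval of a complemented modular lattice is relatively complemented. Then $a_k'\cap s_k=0$ and $a_k'+s_k=a_k+s_k$, so the induction goes through and eventually $\bigoplus_{i<m}a_i'=\sum_{i<m}a_i=1$. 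Since $a_k'\leq a_k$ and $a_k\sim b_k$, the modular-lattice fact recalled just before the definition yields $a_k'\sim b_k'$ for some $b_k'\leq b_k\leq a_0$, with a corresponding axis $a_{0k}'\leq s_k\vee a_0$; this gives the desired skew $n$-$m$-frame.

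The main obstacle is the first assertion, specifically the simultaneous realization of $n$ independent, non-zero, pairwise perspective sub-elements of the initial $x_i$. Each application of the ``non-zero perspective sub-elements'' principle shrinks both elements involved, so one must carefully keep track of enough residual room to perform the remaining refinements without collapsing independence; this is exactly the point at which Jónsson's original argument balances perspectivity axes against a dimension-theoretic invariant of the simple lattice. The second assertion, by contrast, is a routine refinement once one has the relative-complementation principle and the perspectivity transfer lemma in hand.
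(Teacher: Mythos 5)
The paper does not prove this statement at all: it is imported as a Fact, the first sentence being J\'onsson's \cite[Theorem 1.7]{jon1} and the second being \cite[Proposition 6.2]{rep}. So there is no in-paper proof to match yours against; what can be judged is whether your sketch would actually work. Your treatment of the second assertion does: replacing each tail element $a_k$ by a relative complement $a_k'$ of $a_k\cap s_k$ in $[0,a_k]$ preserves the partial sums ($a_k'+s_k=a_k+s_k$, $a_k'\cap s_k=0$), keeps the first $n$ components untouched, and transfers the perspectivities $a_k\sim b_k$ to $a_k'\sim b_k'\leq b_k\leq a_0$ by the restriction property of perspectivity in modular lattices. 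That is essentially the argument of \cite[Proposition 6.2]{rep}.

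The first assertion is where your sketch has a genuine gap, and it is concentrated in exactly the phrases you flag as delicate. Two points. First, the principle ``for non-zero $x,y$ in a simple complemented modular lattice there are non-zero $x'\leq x$, $y'\leq y$ with $x'\sim y'$'' is not obtained by compressing a projectivity chain with the parallelogram law; the standard route is to show that the set of elements expressible as a \emph{finite} join of elements perspective to sub-elements of $x$ is a neutral ideal (closed under perspectivity and joins), whence by simplicity it is all of $L$. Second, and more seriously, that same lemma is what makes the ``large'' part of the frame work: it gives $1=\sum_{i=0}^{m-1}c_i$ with finitely many $c_i$ each perspective to a sub-element of $a_0$, which is precisely the tail $a_n,\dots,a_{m-1}$ together with the required finiteness of $m$. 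Your proposal instead says ``decompose $c$ \ldots refining if necessary'' and appeals to $a_0$ having ``sufficiently large dimension type''; each application of the shrinking principle only produces a non-zero perspective piece and leaves an uncontrolled remainder, so nothing in your argument guarantees the refinement terminates after finitely many steps. Without the neutral-ideal (or an equivalent dimension-theoretic) argument, the existence of a \emph{finite} family $a_n,\dots,a_{m-1}$ summing with $\bigoplus_{i<n}a_i$ to $1$ is not established, and that is the actual content of J\'onsson's theorem. The initial block ($n$ independent elements all perspective to a common shrunken $a_0$, with $b_i=a_0$ for $0<i<n$) is fine as you describe it, granted the lemma.
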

In particular this applies to $\Lat(R)$, $R$ a simple regular ring,
due to the following result of Fred Wehrung \cite[Theorem 4.3]{fredb}.
 
\begin{fact}\label{f3}
For a regular ring $R$,
the lattice of all congruence relations of $\Lat(R)$
is ismorphic to the  lattice of ideals of $R$.
\end{fact}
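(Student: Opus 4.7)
The plan is to exhibit mutually inverse, order-preserving maps
between two-sided ideals $I$ of $R$ and congruences $\theta$ of $\Lat(R)$.
From an ideal $I$, the quotient map $\pi\colon R\to R/I$ (with $R/I$
again regular) induces a surjective lattice homomorphism
$\pi_*\colon \Lat(R)\to \Lat(R/I)$, $aR\mapsto \pi(a)(R/I)$;
put $\theta_I=\ke(\pi_*)$.
In the other direction, from a congruence $\theta$ set
$I_\theta=\{a\in R\mid aR\,\theta\,0\}$.

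Closure of $I_\theta$ under addition and right multiplication is immediate:
$(a+b)R\le aR+bR$ and $arR\le aR$, and in any lattice congruence an
element below the $\theta$-class of $0$ lies in it.
The crux is left-multiplicative closure.  Given $a\in I_\theta$ and $r\in R$,
the right $R$-linear surjection $aR\to raR$, $x\mapsto rx$, splits because
$raR$ is finitely generated, hence projective, over the regular ring $R$;
so $aR=K\oplus fR$ as right $R$-modules with $f$ idempotent,
$fR\le aR$, and $fR\cong raR$.
Over a regular ring, isomorphism of principal right ideals yields
projectivity in $\Lat(R)$, i.e.\ a finite chain of perspectivities,
and in any modular lattice perspectivity preserves the $\theta$-class of $0$:
if $x\sim_c y$ and $x\,\theta\,0$, then $y+c=x+c\,\theta\,c$, so
$y=y\cap(y+c)\,\theta\,y\cap c=0$.
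Iterating gives $fR\,\theta\,0$ from $fR\le aR\,\theta\,0$, and then
$raR\,\theta\,0$, so $ra\in I_\theta$.

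To verify the correspondence is bijective: $a\in I_{\theta_I}$ iff
$\pi(a)(R/I)=0$ iff $a\in I$, so $I\mapsto I_{\theta_I}$ is the identity.
For the other direction, if $aR\,\theta\,bR$ then $aR\,\theta\,aR\cap bR$,
and taking a regular complement $e'R$ of $aR\cap bR$ inside $aR$
gives $e'R\,\theta\,0$, so $e'\in I_\theta$ and $aR\subseteq bR+I_\theta$;
by symmetry $\pi_*(aR)=\pi_*(bR)$, so $(aR,bR)\in\theta_{I_\theta}$.
Conversely, $\pi_*(aR)=\pi_*(bR)$ gives $aR\subseteq bR+cR$ with
$cR\,\theta\,0$; the sandwich $bR\le aR+bR\le bR+cR\,\theta\,bR$
yields $aR+bR\,\theta\,bR$, and symmetrically $aR+bR\,\theta\,aR$,
whence $aR\,\theta\,bR$.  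Order-preservation in both directions is
transparent from the definitions.
The principal obstacle is the left-multiplication step, where the
splitting of surjections onto projectives (via regularity of $R$)
must be married to the modular-lattice fact that perspectivity
preserves the $\theta$-class of $0$.
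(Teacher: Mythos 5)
The paper itself gives no proof of this Fact: it is quoted from Wehrung \cite[Theorem 4.3]{fredb}. So your argument has to stand on its own. Its overall architecture is the right one: the mutually inverse order\--preserving maps $I\mapsto\ke(\pi_*)$ and $\theta\mapsto I_\theta=\{a\in R: aR\mathrel{\theta}0\}$, with closure of $I_\theta$ under left multiplication as the crux, and your bijectivity computations at the end are correct. (You do use silently that $\pi_*$ preserves meets, i.e.\ that $\Lat(R)\to\Lat(R/I)$ is a lattice homomorphism and not merely a join homomorphism; this is true for regular rings but is itself a small lemma.)

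The genuine gap is the sentence ``Over a regular ring, isomorphism of principal right ideals yields projectivity in $\Lat(R)$, i.e.\ a finite chain of perspectivities.'' This is false for regular rings that are not directly finite. Take $R=\End(V)$ with $\dim V=\aleph_0$, so that $\Lat(R)\cong\Lat(V)$ via $sR\mapsto\im s$. If $f$ is an idempotent whose image $W$ has infinite dimension and infinite codimension, then $R\cong fR$ as right $R$-modules (any linear isomorphism $V\to W$ induces one by composition), yet $V$ and $W$ are connected by no chain of perspectivities: a perspectivity $X\sim_C Y$ forces $\dim X=\dim Y$ and $\operatorname{codim}X=\operatorname{codim}Y$, while $\operatorname{codim}V=0\neq\aleph_0=\operatorname{codim}W$. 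What your argument actually needs is only that the $\theta$-class of $0$ is closed under module isomorphism of principal right ideals, and this follows from the weaker, correct fact of von Neumann that \emph{disjoint} isomorphic principal right ideals are perspective (via the graph of the isomorphism, which is again a principal right ideal). Concretely, given an isomorphism $\phi\colon fR\to gR$ with $fR\mathrel{\theta}0$, choose a principal $D$ with $gR=(fR\cap gR)\oplus D$; then $D\cap fR=0$, and $D':=\phi^{-1}(D)$ is a principal right ideal with $D'\leq fR$, $D'\cong D$ and $D'\cap D=0$, hence $D'\sim D$; since $D'$ and $fR\cap gR$ lie below $fR$ they are $\theta$-congruent to $0$, so by your perspectivity computation $D\mathrel{\theta}0$ and therefore $gR=(fR\cap gR)+D\mathrel{\theta}0$. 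With this replacement (applied to your $fR\cong raR$) the proof goes through.
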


In presence
of a skew $n$-frame, coordinatization, if possible, is unique
due to the following result of J\'{o}nsson, cf. \cite[Theorem 11.2]{rep}.

\begin{fact}\label{f4}
For regular rings $R,R'$, 
if $\Lat(R)$ admits a skew $n$-frame, $n \geq 3$,
then for any isomorphism $\theta:\Lat(R) \to \Lat(R')$
there is an isomorphism $\iota:R \to R'$ such that
$\theta(aR)= \iota(a)R'$ for all $a \in R$.
\end{fact}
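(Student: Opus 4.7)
The plan is to invoke von Neumann's coordinatization machinery: when a complemented modular lattice carries a skew frame of order at least three, the ring whose principal right ideals recover the lattice is canonically reconstructible from purely lattice-theoretic data (the frame together with the L-number construction). A lattice isomorphism respecting the frame will therefore induce a ring isomorphism, and the compatibility with principal right ideals will be automatic.

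Step 1 (Realize the frame in the ring). Since $\Phi = (a_i, a_{0i})$ is a skew $n$-$m$-frame in $\Lat(R)$, there exist orthogonal idempotents $e_0,\dots,e_{m-1}\in R$ with $\sum_i e_i = 1$ and $a_i = e_iR$, and elements $e_{0i}\in e_0Re_i$, $e_{i0}\in e_iRe_0$ (for $0<i<n$) witnessing the perspectivities $a_i\sim_{a_{0i}}a_0$, satisfying the matrix-unit relations $e_{0i}e_{i0}=e_0$ and $e_{i0}e_{0i}=e_i$. Transport via $\theta$: take $e'_i$ to be the idempotent with $\theta(a_i)=e'_iR'$ and $a'_{0i}:=\theta(a_{0i})$, producing a skew $n$-$m$-frame $\Phi'$ in $\Lat(R')$ and, correspondingly, matrix units $e'_{0i},e'_{i0}\in R'$.

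Step 2 (Coordinate ring via L-numbers). Invoke the von Neumann construction as reviewed in \cite{rep}: each $x$ in the corner ring $S := e_0Re_0$ corresponds bijectively to a distinguished lattice element $[x]$ in the interval $[0,a_0+a_1]$ (its ``graph'' relative to $a_{01}$), and the set of such graphs carries lattice-definable ring operations — addition using the two axes $a_0,a_1$, multiplication using a further axis $a_2$, which is precisely why $n\geq 3$ is required. The map $x\mapsto[x]$ is a ring isomorphism of $S$ onto this auxiliary ring $\tilde S_\Phi$, and $R$ is then recovered from $\tilde S_\Phi$ via the remaining frame data as an $m\times m$ matrix-like ring over $S$ (with possibly non-uniform entry types in the skew regime $m>n$).

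Step 3 (Induced ring isomorphism). Because $\theta$ maps $\Phi$ to $\Phi'$ and preserves all lattice operations, it sends $\tilde S_\Phi$ bijectively onto $\tilde S_{\Phi'}$ and commutes with the lattice-definable sum and product; hence it induces a ring isomorphism $\iota_0:S\to S':=e'_0R'e'_0$, which extends via the matrix-unit formalism to the desired $\iota:R\to R'$. The identity $\theta(aR)=\iota(a)R'$ is then automatic, since for any $a\in R$ the principal right ideal $aR$ is read off the same lattice expression from $\Phi$ and the L-numbers of the matrix entries of $a$, an expression preserved by $\theta$. The main obstacle is Step 2: verifying that lattice-theoretic composition of graphs coincides with ring multiplication in $S$; this is the technical heart of the coordinatization theorem and the precise point at which the hypothesis $n\geq 3$ cannot be weakened, since two axes suffice to express addition but a third is needed to witness composition as a join/meet expression.
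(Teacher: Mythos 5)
The paper gives no proof of this Fact at all: it is quoted from J\'{o}nsson via \cite[Theorem 11.2]{rep}, and your outline is precisely the von Neumann--J\'{o}nsson coordinatization argument that the cited sources carry out (recover the corner ring $e_0Re_0$ as the lattice-definable auxiliary ring of the frame, observe that $\theta$ transports frame and auxiliary ring, then extend over the remaining components), so you are on the same route as the reference the paper delegates to. Your sketch is correct in outline; the one place where the real work is hidden is Step~3 in the skew regime $m>n$, where the components $a_i$ with $i\geq n$ are perspective only to subideals $b_i\leq a_0$, so $R$ is \emph{not} a full $m\times m$ matrix ring over $S$ and the clean matrix-unit extension must be replaced by J\'{o}nsson's more delicate bookkeeping for large partial frames --- that extension, together with the verification that $\theta(aR)=\iota(a)R'$ is genuinely forced rather than ``automatic,'' is exactly the content of \cite[Theorem 11.2]{rep} and \cite{jon1}.
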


The approach of  \cite{rep} to coordinatization
relied on the following, combining Theorem 7.1 and Corollary 9.2 in \cite{rep}.

\begin{fact}\label{f5}
For any vector space $V$, complemented sublattice $L$ of $\Lat(V)$,
and skew $n$-frame $\Phi$ in $L$, $n\geq 3$,
there is a regular subring $R_0$ of $\End(V)$ and an isomorphism 
$\omega: \Lat(R_0) \to L$
such that $\omega(\varphi R_0) =\im\varphi$ for all $\varphi\in R_0$.
\end{fact}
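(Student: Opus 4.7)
The plan is to realise the abstract coordinatization of the skew $n$-frame $\Phi$ concretely inside $\End(V)$. The frame gives $V = \bigoplus_{i=0}^{m-1} a_i$ as an internal direct sum, together with diagonals $a_{0i}$ that are common complements of $a_0$ and $a_i$ inside $a_0 + a_i$. Each $a_{0i}$ induces a linear bijection $\sigma_i \colon a_0 \to a_i$ for $0 < i < n$, and an injection $\sigma_i \colon b_i \to a_i$ for $n \leq i < m$, sending $v$ to the unique $w \in a_i$ with $v - w \in a_{0i}$. Using these, every $\varphi \in \End(V)$ is decomposed into an $m \times m$ array of ``matrix entries'', each of which, transported back through the $\sigma_i$, becomes an endomorphism of (a submodule of) $a_0$.

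Next, I would identify the ring of endomorphisms of $a_0$ coordinatized by $L$. For each ``L-number'' $x \in L$ (an element with $x + a_0 = a_0 + a_1$ and $x \cap a_0 = 0$, so $x$ is the graph of a linear map $a_0 \to a_1$), composition with $\sigma_1^{-1}$ yields an endomorphism of $a_0$. The set $E$ of all such endomorphisms, endowed with the addition and multiplication on L-numbers due to von Neumann (defined using the frame, with $n \geq 3$ essential for associativity of multiplication), forms a regular subring of $\End(a_0)$. I then define $R_0 \subseteq \End(V)$ as the set of all $\varphi$ whose matrix entries, transported via the $\sigma_i$, lie in $E$. This $R_0$ is essentially an $m \times m$ matrix ring over $E$ realised inside $\End(V)$, hence regular.

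By Fact \ref{f1}, $\omega(\varphi R_0) := \im \varphi$ is a lattice embedding $\Lat(R_0) \hookrightarrow \Lat(V)$, and its image lies in $L$ because the images of the generating matrix units (built from $E$) are frame-generated subspaces, already in $L$. The main obstacle is surjectivity of $\omega$ onto $L$: every $u \in L$ must be expressible as $\im \varphi$ for some $\varphi \in R_0$. The frame permits a decomposition of $u$ into ``row'' pieces parametrised by their projections onto the $a_i$, and each piece, by the modular lattice identities defining L-numbers, corresponds to an element of $E$; assembling these yields the required $\varphi$. Verifying that every $u \in L$ admits such a decomposition is the crux of the coordinatization theorem, and is precisely where the large partial $n$-frame hypothesis on $\Phi$ (forcing enough independence and perspectivity to recover arbitrary subspaces from their frame coordinates) enters decisively.
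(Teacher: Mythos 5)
The paper does not prove this Fact at all: it is imported verbatim as the combination of Theorem~7.1 and Corollary~9.2 of \cite{rep}, and your outline follows exactly the route taken there (and classically by von Neumann and J\'onsson) --- transport along the axes $a_{0i}$, coordinate ring $E$ of $L$-numbers acting on $a_0$, $R_0$ realised as an endomorphism ring over $E$ inside $\End(V)$, and $\omega(\varphi R_0)=\im\varphi$. So the strategy is the intended one; the problem is that the two statements that actually constitute the Fact are asserted rather than proved.

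Concretely: (i) You justify $\im\varphi\in L$ only for ``generating matrix units'', but the Fact requires $\im\varphi\in L$ for \emph{every} $\varphi\in R_0$, and the image of a general $\varphi$ is not visibly a lattice polynomial in frame elements and graphs of members of $E$. The standard repair is to reduce to idempotents of $R_0$ (using regularity, $\varphi R_0=\varepsilon R_0$ with $\varepsilon^2=\varepsilon$ and $\im\varphi=\im\varepsilon$) and then to prove a normal-form lemma matching idempotents with elements of $L$; this same lemma is what gives the surjectivity you explicitly defer as ``the crux''. Deferring it leaves the Fact unproved, since surjectivity of $\omega$ is exactly the assertion $\omega:\Lat(R_0)\to L$ is \emph{onto} $L$ and not merely into $\Lat(V)$. (ii) ``Hence regular'' conceals two points. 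Because for $n\le i<m$ the component $a_i$ is perspective only to some $b_i\le a_0$, $R_0$ is not a full $m\times m$ matrix ring over $E$ but rather $\End_E(P)$ for a projective module $P=\bigoplus_i e_iE$; and the regularity of $E$ itself is not free --- it is precisely where the hypothesis that $L$ is \emph{complemented} enters, since a quasi-inverse of an $L$-number must be produced whose graph again lies in $L$. (iii) A small but real inaccuracy: in this concrete setting associativity of multiplication on $E$ is automatic (it is composition of endomorphisms); what $n\ge 3$ buys is that the sum and composite of two $L$-numbers again have graph in $L$, expressible by lattice terms using a third frame component. None of these steps is false in spirit, but as written the proposal restates the theorem at its hardest points instead of proving them.
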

Now, we are in position to derive a representation of $R$ 
from a representation  of $\Lat(R)$.

\begin{pro}\label{f6}
Given a regular ring $R$, a skew $n$-frame $\Phi$, $n \geq 3$,
in $\Lat(R)$,
a vector space $V$, 
and an embedding $\eta:\Lat(R) \to \Lat(V)$,
there is an embedding $\iota:R \to \End(V)$ such that
$\eta(aR)=\im  \iota(a)$ for all $a \in R$.
\end{pro}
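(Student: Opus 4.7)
The plan is to reduce Proposition~\ref{f6} to a combined application of Facts~\ref{f5} and~\ref{f4}: the embedding $\eta$ exhibits $\Lat(R)$ as a complemented sublattice of $\Lat(V)$ carrying a skew $n$-frame, Fact~\ref{f5} then produces a coordinatizing regular ring inside $\End(V)$, and Fact~\ref{f4} finally identifies that ring with $R$ in the correct manner.

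First I would set $L := \eta(\Lat(R))$, a sublattice of $\Lat(V)$ containing $0$ and $V$ (since $\eta$ preserves the bounds, which are constants in our signature) and complemented because $\Lat(R)$ is. I would then transfer the skew $n$-frame $\Phi$ of $\Lat(R)$ to a skew $n$-frame $\eta(\Phi)$ of $L$ by applying $\eta$ to each of its components $a_i,a_{0i}$. This step is routine: every defining condition of a skew frame -- the decomposition $1=\bigoplus_i a_i$ and the perspectivities $a_i\sim_{a_{0i}} b_i$ -- is a lattice-theoretic statement in $+,\cap,0,1$, hence is preserved by the lattice embedding $\eta$.

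With $L$ and $\eta(\Phi)$ in hand, Fact~\ref{f5} supplies a regular subring $R_0 \leq \End(V)$ together with a lattice isomorphism $\omega:\Lat(R_0)\to L$ satisfying $\omega(\varphi R_0)=\im\varphi$ for all $\varphi\in R_0$. Composing gives a lattice isomorphism $\theta := \omega^{-1}\circ\eta:\Lat(R)\to\Lat(R_0)$. Since $\Lat(R)$ admits the skew $n$-frame $\Phi$ with $n\geq 3$, Fact~\ref{f4} produces a ring isomorphism $\iota:R\to R_0$ with $\theta(aR)=\iota(a)R_0$ for all $a\in R$. Viewing $\iota$ as an embedding $R\to\End(V)$ through the inclusion $R_0\hookrightarrow\End(V)$, one reads off $\eta(aR)=\omega(\theta(aR))=\omega(\iota(a)R_0)=\im\iota(a)$, which is the required conclusion.

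I expect no substantive obstacle here -- the argument is essentially careful bookkeeping of the preceding facts. The only point that genuinely needs checking is that $\eta(\Phi)$ is a skew $n$-frame in $L$, and in particular that $\eta$ sends the top of $\Lat(R)$ to $V$ itself (so that the transferred frame really does span $V$, as demanded by Fact~\ref{f5}); both follow immediately from the convention that lattice morphisms preserve the constants $0,1$.
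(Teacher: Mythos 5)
Your proof is correct and follows essentially the same route as the paper: form $L=\eta(\Lat(R))$, apply Fact~\ref{f5} to get $R_0$ and $\omega$, then apply Fact~\ref{f4} to $\omega^{-1}\circ\eta$ and read off $\eta(aR)=\im\iota(a)$. Your extra remark that the transferred frame $\eta(\Phi)$ is a skew $n$-frame in $L$ (being defined by lattice identities preserved under the $0,1$-preserving embedding) is a detail the paper leaves implicit, but it is the same argument.
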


\begin{proof}
Let $L$ denote the sublattice $\eta(\Lat(R))$ of $\Lat(V)$.
With $R_0$ and $\omega$ according to Fact~\ref{f5}
one obtains an isomorphism
$\omega^{-1}\circ \eta:\Lat(R)\to \Lat(R_0)$.
By Fact~\ref{f4}
there is an isomorphism $\iota:R\to R_0$
such that $(\omega^{-1}\circ \eta)(aR)=\iota(a)R_0$
for all $a \in R$. It follows that
$\eta(aR)= \omega(\iota(a)R)= \im \iota(a)$ for all
$a \in R$.
\end{proof}

\section{$*$-regular rings and inner product spaces}
A $*$-\emph{ring} is a ring $R$ endowed with an
involution $r \mapsto r^*$.
Such $R$ is $*$-\emph{regular} if it is regular and  $rr^*=0$ only for $r=0$.
A \emph{projection} is an idempotent $e$ such that $e=e^*$; we write
$e \in P(R)$.
A $*$-ring is
 $*$-regular if and only if 
for any $a \in R$ there is $e \in P(R)$ with
$aR=eR$; such $e$ is unique.
In particular, for  $*$-regular  $R$,
each ideal is closed under the involution. It follows
\begin{fact}\label{f7}
A $*$-regular ring is simple (subdirectly irreducible)
if and only if so is its ring reduct.
\end{fact}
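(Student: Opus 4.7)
The plan is to reduce the statement to the sentence immediately preceding it, namely that in a $*$-regular ring every two-sided (ring) ideal is automatically closed under the involution. Granting this, the congruences of $R$ as a plain ring and of $R$ as a $*$-ring are controlled by the same collection of ideals, so the two congruence lattices of $R$ (with its ring reduct, respectively its $*$-ring structure) literally coincide. Both simplicity and subdirect irreducibility are properties of the congruence lattice alone---respectively, having only the two trivial congruences, and possessing a unique smallest nonzero congruence---so both properties transfer verbatim between $R$ and its ring reduct.

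The only thing I would spell out in the written proof is the ideal-closure claim itself. Fix a ring ideal $I$ and $a \in I$; $*$-regularity supplies a projection $e \in P(R)$ with $aR = eR$. Then $e = ax$ for some $x \in R$, giving $e \in I$, while from $a \in eR$ one has $a = ea$. Using $e^{*} = e$, one computes $a^{*} = (ea)^{*} = a^{*} e \in R\,I \subseteq I$, so $I$ is indeed closed under $*$.

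One subtlety---barely a subtlety---is that a $*$-ring congruence must respect $*$ in addition to the ring operations, which is an extra demand beyond being a ring congruence. However, once the kernel ideal $I$ is closed under $*$, the induced ring congruence automatically satisfies $a - b \in I \Rightarrow a^{*} - b^{*} \in I$, so no further constraint is imposed and the bijection between ideals and congruences agrees on the nose for the two reducts. I do not foresee any real obstacle here; all the content sits in the ideal-closure observation, after which the equivalence is immediate from general universal algebra.
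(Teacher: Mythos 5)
Your proposal is correct and follows exactly the paper's intended argument: the paper derives Fact 7 directly from the preceding observation that every ring ideal of a $*$-regular ring is closed under the involution, which is precisely the ideal-closure claim you prove (via $e\in aR\subseteq I$ and $a^*=a^*e$), after which the identification of the two congruence lattices gives both simplicity and subdirect irreducibility at once.
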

For a $*$-ring $R$ and projection $e \in R$.
the \emph{corner} $eRe$ is the $*$-ring 
consisting of all $eae$, $a \in R$, with unit $e$ 
and operations inherited from $R$, otherwise.
The following is Lemma 2 together with Theorem 3 in
\cite{simp}. 
\begin{fact}\label{f8}
Given a subdirectly irreducible $*$-regular ring $R$ with minimal ideal $I$
and   a projection $e$ in $I$, the $*$-ring $eRe$ is $*$-regular. Moreover,
$R$ is a homomorphic image of a $*$-regular sub-$*$-ring
of an ultraproduct of $*$-rings $eRe$, where $e$ ranges over the set of projections in $I$.
\end{fact}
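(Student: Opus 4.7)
The involution and multiplication restrict to the corner, with unit $e$. Given $a = eae \in eRe$ and any $x \in R$ with $axa = a$, the element $y := exe$ lies in $eRe$ and, using $ea = a = ae$, satisfies $aya = axa = a$, so $eRe$ is regular. The implication $aa^* = 0 \Rightarrow a = 0$ is inherited from $R$, completing $*$-regularity. The hypothesis $e \in I$ is not used in this first part.

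\textbf{Plan for the second assertion; set-up.} I realize $R$ through its faithful action on $I$. Observe that $eRe \subseteq I$ for each projection $e \in I$ (as $I$ is a two-sided ideal), the family $\{eRe\}_{e \in P(R) \cap I}$ is upward directed by inclusion (via $e \le f \Rightarrow eRe \subseteq fRf$), and $I = \bigcup_e eRe$: each $b \in I$ lies in $e_b R e_b$, where $e_b \in P(R) \cap I$ is obtained by joining the left and right projections of $b$ (both in $I$). Choose an ultrafilter $\mathcal{U}$ on $P(R) \cap I$ containing every set $\{e : e \ge e_0\}$; such exists by directedness. Form $S := \prod_e eRe / \mathcal{U}$; by the ultraproduct theorem together with the first assertion and the first-order axiomatizability of $*$-regularity, $S$ is $*$-regular.

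\textbf{Canonical maps and the sub-$*$-ring $T$.} Define $\Phi_0 : I \to S$ by $\Phi_0(b) := [(ebe)_e]_{\mathcal{U}}$; for $e \ge e_b$ the components equal $b$, so $\Phi_0$ is a $*$-ring embedding. For $a \in R$, put $s_a := [(eae)_e]_{\mathcal{U}}$. A componentwise computation for $e$ dominating both $b$ and $ab$ gives $s_a \Phi_0(b) = \Phi_0(ab)$ in $S$, and dually $\Phi_0(b) s_a = \Phi_0(ba)$, so each $s_a$ is a two-sided multiplier of the $*$-ideal $\Phi_0(I) \subseteq S$. Let $T$ be the sub-$*$-ring of $S$ consisting of all $t$ whose two-sided action on $\Phi_0(I) \cong I$ agrees with multiplication by a (necessarily unique) element of $R$; uniqueness uses that the annihilator of $I$ in $R$ vanishes, since $R$ is semiprime (a general property of regular rings) and $I$ is its unique minimal ideal. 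Sending $t$ to its multiplier then yields a surjective $*$-homomorphism $\pi : T \to R$ with $\pi(s_a) = a$.

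\textbf{Main obstacle.} The delicate step is to verify that $T$ is itself $*$-regular, since sub-$*$-rings of $*$-regular rings are generally not $*$-regular. Given $t \in T$ with $\pi(t) = a$ and any quasi-inverse $x$ of $a$ in $R$, one has $\pi(t s_x t) = axa = a = \pi(t)$, so $t s_x t - t \in \ker \pi$, but equality $t s_x t = t$ may fail in $S$. The task is therefore to correct $s_x$ inside $T$ by elements of $\ker\pi$ (the annihilator of $\Phi_0(I)$ in $T$) to produce an honest quasi-inverse of $t$ within $T$; this correction, exploiting the availability of projections in $I$ and the $*$-regularity of $R$, is the technical heart of the construction from \cite{simp} and the step I expect to require the finest analysis.
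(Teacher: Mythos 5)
The paper does not prove this Fact at all: it is quoted from \cite{simp} (Lemma~2 together with Theorem~3 there), so there is no internal proof to compare your argument against, and it has to be judged on its own. Your first assertion is proved correctly and completely: for $a=eae$ and $x$ with $axa=a$, the element $y=exe$ satisfies $aya=a$ inside $eRe$, the involution restricts to the corner, and $aa^*=0\Rightarrow a=0$ is inherited; the observation that $e\in I$ is irrelevant here is also correct. Your set-up for the second assertion is likewise sound: the projections in $I$ are upward directed, every $b\in I$ lies in $e_bRe_b$ for the join $e_b$ of its left and right projections, the ultrafilter $\mathcal{U}$ exists, $S$ is $*$-regular by \L o\'s, $\Phi_0$ is a $*$-embedding of $I$, the $s_a$ are two-sided multipliers of $\Phi_0(I)$, and uniqueness of the multiplier follows from semiprimeness of $R$ together with minimality of $I$.

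The genuine gap is exactly the one you flag yourself as the ``main obstacle'': nothing in the proposal shows that your $T$ (the $*$-ring of all elements of $S$ acting on $\Phi_0(I)$ as multiplication by some element of $R$) is $*$-regular, and that is precisely the content of the phrase ``$*$-regular sub-$*$-ring'' in the statement. Producing a surjective $*$-homomorphism from \emph{some} sub-$*$-ring of $S$ onto $R$ is the easy half; without the $*$-regularity of the domain the Fact is not proved, and sub-$*$-rings of $*$-regular rings are in general not $*$-regular, as you note. The device that closes this gap is the Moore--Penrose (relative) inverse: in a $*$-regular ring every element $a$ has a unique $a^+$ with $aa^+a=a$, $a^+aa^+=a^+$, $(aa^+)^*=aa^+$, $(a^+a)^*=a^+a$; a sub-$*$-ring closed under $a\mapsto a^+$ is itself $*$-regular, and $^+$ is a term-definable operation preserved by ultraproducts and by $*$-homomorphisms between $*$-regular rings. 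One must then verify that the chosen $T$ (for instance the $^+$-closure of $\{s_a: a\in R\}\cup\Phi_0(I)$ inside $S$) still consists of multipliers of $\Phi_0(I)$ and that $\pi(t^+)=\pi(t)^+$, using that $\ker\pi$ is the annihilator of $\Phi_0(I)$ in $T$. Since you explicitly defer this verification rather than carry it out, the proposal establishes only the first assertion of the Fact and leaves the second one open.
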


By an \emph{inner product space} $V$
we will mean a vector space (also denoted by $V$) over a division $*$-ring $F$,
endowed with a sesqui-linear form 
$\skv{.}{.}$
 which
is \emph{anisotropic} ($\skv{v}{v}=0$ only for $v=0$)
 and \emph{orthosymmetric} (
$\skv{v}{w}=0$ if and only if $\skv{w}{v}=0$).
The \emph{orthogonal} of a subset $X$ is the
subspace $X^\perp=\{y \in V\mid \forall x \in X.\,\skv{x}{y}=0\}$.
For subspaces $U,W$ of $V$ we write $U\perp W$ if $W\subseteq U^\perp$;
in this case we write  $U+W=U\oplus^\perp W$. 
A subspace $U$ is \emph{closed} if $U^{\perp\perp}=U$;
equivalently, $V=U\oplus^\perp W$ for some $W$.
Here, $W=U^\perp$ and one has the \emph{orthogonal projection}
$\pi_U$ where $\pi_U(x+y) =x$ for $x \in U$ and $y \in U^\perp$.
Let $\End^*(V)$ denote the $*$-ring
consisting of those endomorphisms $\varphi$
of the vector space $V$ which have an adjoint 
$\varphi^*$ w.r.t. $\skv{.}{.}$.
If $\varphi$ is a projection
in $\End^*(V)$ then $V=\im \varphi \oplus^\perp
\im({\sf id}_V -\varphi)$. It follows

\begin{fact}\label{f9}
An endomorphism $\varphi$ of $V$ is a projection in $\End^*(V)$ if and
only if $\varphi=\pi_U$, where $U=\im \varphi$.
\end{fact}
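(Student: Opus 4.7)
The statement is an ``if and only if'', so the plan is to handle the two directions separately, with most of the work already packaged in the displayed identity
\[ V = \im \varphi \oplus^\perp \im(\mathsf{id}_V-\varphi) \]
that the excerpt records just before the fact.

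For the forward implication, assume $\varphi$ is a projection in $\End^*(V)$ and set $U = \im\varphi$. The cited decomposition says $V = U \oplus^\perp \im(\mathsf{id}_V-\varphi)$, so $U$ admits an orthogonal complement; hence $U$ is closed and $U^\perp = \im(\mathsf{id}_V-\varphi)$. For an arbitrary $v \in V$ write $v = \varphi(v) + (\mathsf{id}_V-\varphi)(v)$. The first summand lies in $U$, the second in $U^\perp$, so by the definition of $\pi_U$ we have $\pi_U(v) = \varphi(v)$. This holds for every $v$, so $\varphi = \pi_U$.

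For the converse, assume $\varphi = \pi_U$ with $U = \im\varphi$; this presupposes $U$ closed, i.e.\ $V = U \oplus^\perp U^\perp$. I must verify that $\pi_U \in \End^*(V)$, that $\pi_U^2 = \pi_U$, and that $\pi_U^* = \pi_U$. Idempotence is routine from the unique decomposition $v = x+y$ with $x \in U$, $y \in U^\perp$, since then $\pi_U(x) = x$. For self-adjointness, let $v = x_v + y_v$ and $w = x_w + y_w$ be the corresponding decompositions. Then
\[ \skv{\pi_U v}{w} = \skv{x_v}{x_w + y_w} = \skv{x_v}{x_w}, \]
because $\skv{x_v}{y_w} = 0$ by $y_w \in U^\perp$; symmetrically, using orthosymmetry to turn $\skv{y_v}{x_w} = 0$ into $\skv{x_w}{y_v} = 0$ (which holds since $x_w \in U$, $y_v \in U^\perp$), we get $\skv{v}{\pi_U w} = \skv{x_v}{x_w}$ as well. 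Thus $\pi_U$ has adjoint $\pi_U$ itself, and in particular lies in $\End^*(V)$ and is a projection there.

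There is no real obstacle here; the only subtle point is that the ``$\varphi = \pi_U$'' formulation of the backward direction tacitly requires $U$ to be closed so that $\pi_U$ is defined, and the orthosymmetry hypothesis on $\skv{\cdot}{\cdot}$ is what makes the two cross terms in the self-adjointness computation both vanish.
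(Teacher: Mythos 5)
Your proof is correct and follows exactly the route the paper intends: the paper states Fact~\ref{f9} as an immediate consequence of the preceding observation that a projection $\varphi\in\End^*(V)$ gives $V=\im\varphi\oplus^\perp\im({\sf id}_V-\varphi)$, and your forward direction is just that observation combined with the definition of $\pi_U$. Your converse direction, verifying via orthosymmetry that $\pi_U$ is a self-adjoint idempotent in $\End^*(V)$, is the routine check the paper leaves implicit, and it is carried out correctly.
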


A \emph{representation} of a $*$-ring $R$ within
$V$ is an embedding of $R$ into $\End^*(V)$.
Of course, any representation $\iota$ of a  $*$-ring $R$
within  $V$ gives rise to   representations
of   corners $eRe$ within $\im \iota(e)$.

Inner product spaces will be considered as 
$2$-sorted structures with sorts $V$ and $F$.
In particular, the class of inner product spaces is
closed under formation of ultraproducts. 
In this setting,
representations of 
$*$-rings $R$ can be viewed as $3$-sorted structures
(with third sort $R$),
again forming a class closed under ultraproducts 
\cite[Proposition 13]{awalg}. On the other hand,
a representation of $R$ in $V$ gives rise to 
representations of homomorphic images of $R$ in
closed subspaces of certain ultrapowers of $V$
\cite[Proposition 25]{awalg}. It follows

\begin{fact}\label{f10}
In the context of Fact~\ref{f8}, if  each
$*$-ring $eRe$  admits a representation within some inner product space $V_e$
for each projection $e \in I$,
 then the $*$-ring $R$ admits a representation within  a closed subspace of an ultraproduct of spaces  $V_e$, where $e$ ranges over the set of projections in $I$.
\end{fact}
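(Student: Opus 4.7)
The plan is to chain Fact~\ref{f8} with the two results of \cite{awalg} mentioned in the paragraph immediately preceding the statement. Fact~\ref{f8} provides an ultrafilter $\mathcal U$ on the set $E$ of projections in $I$ such that $R$ is a homomorphic image of some $*$-regular sub-$*$-ring $S$ of the ultraproduct $A=\prod_{e\in E} eRe/\mathcal U$. The strategy is to produce a representation of $A$ in an inner product space assembled from the given $V_e$ using the \emph{same} ultrafilter $\mathcal U$, restrict the result to $S$, and finally transfer along the surjection $S\twoheadrightarrow R$.

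For the assembly step, each hypothesised representation $\iota_e\colon eRe\to\End^*(V_e)$ is a $3$-sorted structure in the sense of the discussion just before the statement. By \cite[Proposition 13]{awalg}, the class of such $3$-sorted representations is closed under ultraproducts, so forming the ultraproduct of the $\iota_e$ with respect to $\mathcal U$ produces a representation $\tilde\iota\colon A\to\End^*(W)$, where $W=\prod_{e\in E}V_e/\mathcal U$ is the corresponding ultraproduct of inner product spaces (the sesquilinear form and the involution on the scalar sort pass to the ultraproduct without change). Restricting $\tilde\iota$ to $S\subseteq A$ yields a representation of $S$ in $W$.

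Finally, applying \cite[Proposition 25]{awalg} to the surjection $S\twoheadrightarrow R$ gives a representation of $R$ within a closed subspace of some ultrapower of $W$. The one remaining point is to recognise this ambient space as a closed subspace of an ultraproduct of the family $(V_e)$, rather than of an ultrapower of their ultraproduct. This follows from the standard Fubini-type rearrangement for ultraproducts: an ultrapower $\prod_{j\in J}W/\mathcal V$ of $W=\prod_{e\in E}V_e/\mathcal U$ is canonically isomorphic to an ultraproduct of the family $(V_e)_{e\in E}$ indexed by $J\times E$ with repetitions, under the ultrafilter on $J\times E$ naturally induced by $\mathcal V$ and $\mathcal U$. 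I expect this rearrangement to be the only mildly technical step; the rest is book-keeping with the two cited propositions.
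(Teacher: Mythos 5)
Your proposal is correct and follows exactly the route the paper intends: combine Fact~\ref{f8} with \cite[Proposition 13]{awalg} (ultraproducts of representations) and \cite[Proposition 25]{awalg} (passage to homomorphic images in closed subspaces of ultrapowers), with the final Fubini-type rearrangement of iterated ultraproducts being routine. The paper leaves this chain implicit (``It follows''), so your write-up is just a fleshed-out version of the same argument.
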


\section{Modular ortholattices}
An \emph{ortholattice} is a lattice $L$ together with an
order reversing involution $a \mapsto a^\perp$
such that $1=a\oplus a^\perp$.
Elements $a,b$ are \emph{orthogonal} to each other, $a \perp b$,
if 
$b\leq a^\perp$; this then implies $a \cap b=0$
and we  write $c= a\oplus^\perp b$ if $c=a+b$.
If $L$ is modular and  $u \in L$,
then  the section $[0,u]$ is again an ortholattice
under $a \mapsto u \cap a^\perp$;
that is, $a,b \leq u$ are orthogonal in $[0,u]$ if and only
if they are so in $L$.
Also,
if $L$ is modular and $a \leq b$ then each of the quotients
$b/a$,
$(b \cap a^\perp)/0$, and $a^\perp/b^\perp$  generate the
same lattice congruence. It follows

\begin{fact}\label{f11}
In a modular ortholattice, any lattice congruence 
is also a congruence w.r.t. the operation $a \mapsto a^\perp$.
\end{fact}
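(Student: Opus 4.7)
My plan is to take any lattice congruence $\theta$ on $L$, assume $a\,\theta\,b$, and derive $a^\perp\,\theta\,b^\perp$. The statement is essentially a direct unwinding of the assertion about projective quotients made immediately before it.

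First I would reduce to the case $a\le b$. Set $c=a\cap b$ and $d=a+b$; then $c\,\theta\,d$ since $\theta$ respects $\cap$ and $+$. Because orthocomplementation is order-reversing, $d^\perp\le a^\perp,b^\perp\le c^\perp$, so once one knows $d^\perp\,\theta\,c^\perp$ the identities $a^\perp=a^\perp\cap c^\perp\,\theta\,a^\perp\cap d^\perp=d^\perp$ (and symmetrically for $b^\perp$) give $a^\perp\,\theta\,b^\perp$. Thus it suffices to prove: whenever $a\le b$ and $a\,\theta\,b$, one has $b^\perp\,\theta\,a^\perp$.

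The asserted projectivity supplies this at once. For $a\le b$, the three quotients $b/a$, $(b\cap a^\perp)/0$ and $a^\perp/b^\perp$ generate the same lattice congruence. The hypothesis $a\,\theta\,b$ says that $\theta$ collapses $b/a$; consequently it also collapses $a^\perp/b^\perp$, which is precisely $b^\perp\,\theta\,a^\perp$.

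To justify the projectivity itself I would invoke a standard modular transposition. With $a\le b$, modularity gives $a+(b\cap a^\perp)=b\cap(a+a^\perp)=b$ and $a\cap(b\cap a^\perp)=0$, so $b\cap a^\perp$ is a relative complement of $a$ in $[0,b]$; the mutually inverse maps $x\mapsto x+a$ and $y\mapsto y\cap a^\perp$ give an isomorphism $[0,b\cap a^\perp]\cong[a,b]$, and these perspective intervals generate the same congruence. The symmetric argument, applied with $b^\perp\le a^\perp$ and relative complement $a^\perp\cap b$ of $b^\perp$ in $[0,a^\perp]$, yields $[0,b\cap a^\perp]\cong[b^\perp,a^\perp]$. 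There is no genuine obstacle here; the whole fact is routine bookkeeping with modular transpositions.
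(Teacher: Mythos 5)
Your argument is correct and is essentially the paper's own: the Fact is stated as an immediate consequence of the preceding observation that for $a\le b$ the quotients $b/a$, $(b\cap a^\perp)/0$ and $a^\perp/b^\perp$ generate the same congruence, which is exactly the projectivity you establish via the two modular transpositions. Your reduction to the case $a\le b$ via $c=a\cap b$, $d=a+b$ and the explicit verification of the transposed intervals merely fill in details the paper leaves implicit.
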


The notion of skew frame can be adapted to the ortholattice setting
requiring the $a_i$ to be pairwise orthogonal, see Niemann
\cite{nik}. A weaker version will suffice, here.
We write  $a \sim ^\perp b$ if $a \perp b$ and 
$a \sim b$. An \emph{orthogonal semiframe} in an
ortholattice $L$ consists of elements $a_0, \ldots ,a_{k-1}$
such that $1=\bigoplus_{i=0}^{k-1} a_i$ and for 
each $a_i$ there is $b_i\sim^\perp a_i$.
 
\begin{lem}\label{l1}
Every modular ortholattice $L$ admitting some
skew   $2$-$m$-frame  also admits an orthogonal semiframe.
In particular, any
simple $L$ of height at least $2$ admits an orthogonal semiframe.
\end{lem}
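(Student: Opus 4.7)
The plan is to reuse the elements $a_0,\ldots,a_{m-1}$ of the given skew $2$-$m$-frame as the direct decomposition of $1$ — so the clause $1=\bigoplus_{i} a_i$ of the semiframe definition is already satisfied — and, for each $i$, to exhibit a partner $b_i$ with $b_i\sim^\perp a_i$.

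For each index $i$ the frame supplies a perspective partner of $a_i$: $a_0$ and $a_1$ are mutually perspective via $a_{01}$ (the case where the frame forces $b_1=a_0$), while for $i\ge 2$ the element $a_i$ is perspective to some $d_i\le a_0$ via $a_{0i}$. Setting $d_0:=a_1$, $d_1:=a_0$, and $u_i:=a_i+d_i$, I define
$$b_i := u_i\cap a_i^\perp.$$
The orthomodular law (applied inside $[0,u_i]$) gives $a_i\oplus^\perp b_i = u_i$, so $b_i\perp a_i$ and $b_i$ is a complement of $a_i$ in $[0,u_i]$, just as $d_i$ is; hence $b_i\sim_{a_i} d_i$. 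Combined with the frame perspectivity $a_i\sim d_i$ (with axis $\gamma_i$, namely $a_{01}$ for $i\in\{0,1\}$ and $a_{0i}$ for $i\ge 2$), this places $a_i$ and $b_i$ in the same projectivity class inside $[0,u_i]$.

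The crux of the argument is to upgrade this two-step projectivity into a single perspectivity $a_i\sim b_i$. The interval $[0,u_i]$ carries the triangle $\{a_i,d_i,\gamma_i\}$ of three pairwise independent elements, each pair summing to $u_i$, and inherits the orthocomplementation $x\mapsto u_i\cap x^\perp$, making it itself a modular ortholattice with a rich supply of complements. The idea is to produce an axis $c_i\le u_i$ serving simultaneously as a complement of $a_i$ and of $b_i$ in $[0,u_i]$: one starts from $\gamma_i$ and adjusts it orthogonally (essentially by replacing $\gamma_i$ with a suitable modular-law combination involving $a_i^\perp$ and $d_i$), then verifies $a_i\oplus c_i = b_i\oplus c_i = u_i$ and $a_i\cap c_i = b_i\cap c_i = 0$ using modularity and orthomodularity. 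This "projectivity implies perspectivity" step is where the ortho-structure of $[0,u_i]$ is essential — a mere $2$-frame in a general modular lattice would not suffice — and is the place I expect the main technical difficulty. Once such $c_i$ is in hand, $a_i\sim_{c_i} b_i$, so $b_i\sim^\perp a_i$, and $(a_i,b_i)_{i<m}$ is the desired orthogonal semiframe. The "in particular" assertion is then immediate: by Fact~\ref{f2}, a simple modular ortholattice of height at least $2$ admits a large partial $2$-frame, hence a skew $2$-frame, to which the preceding construction applies.
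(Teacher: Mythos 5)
Your strategy diverges from the paper's, and the divergence is exactly where the gap sits. You keep the frame's own decomposition $1=\bigoplus_i a_i$ and try to pair each $a_i$ with its sectional orthocomplement $b_i=u_i\cap a_i^\perp$ in $u_i=a_i+d_i$. What you actually have at that point is a two-step projectivity: $a_i\sim_{a_{0i}} d_i$ from the frame, and $d_i\sim b_i$ because both are complements of $a_i$ in $[0,u_i]$ (note the axis here is $(d_i+b_i)\cap a_i$, not $a_i$ itself --- $d_i+b_i$ need not equal $u_i$, so writing $b_i\sim_{a_i}d_i$ is already a slip). Collapsing this into a single perspectivity $a_i\sim b_i$ is the entire content of the lemma, and you do not prove it; you only announce that some ``modular-law combination involving $a_i^\perp$ and $d_i$'' should work and that this is where you ``expect the main technical difficulty.'' Perspectivity is not transitive in modular (ortho)lattices, and the standard von Neumann lemma that rescues transitivity requires the three elements to be independent --- which fails here, since $b_i\leq a_i+d_i$. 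So the step cannot be waved through, and no candidate axis $c_i$ is ever exhibited or verified. Insisting on the specific partner $b_i=u_i\cap a_i^\perp$ also makes your task strictly harder than the definition requires, which only asks for \emph{some} $b_i\sim^\perp a_i$.

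The paper avoids composing perspectivities altogether. Its observation (1) reads off a \emph{single} perspectivity with an explicit axis: if $v\oplus b=1$ and $v^\perp\cap b=0$, then $v^\perp\sim_b v\cap(v^\perp+b)$, and the partner automatically lies below $v$, hence is orthogonal to $v^\perp$. Observation (2) then splits off $d=a\cap u^\perp$ and $f=(u+d)^\perp$ from a decomposition $1=u\oplus a$, each new summand coming with an orthogonal perspective partner by construction, and an induction on $m$ finishes the argument. Note that this produces a \emph{new}, pairwise orthogonal decomposition rather than reusing the $a_i$; that pairwise orthogonality is what the proof of the Main Lemma actually consumes (``pairwise orthogonal projections $e_i$''), so even a completed version of your construction, whose components $a_i$ are merely independent, would not feed correctly into Lemma~\ref{l2}. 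To repair your write-up you would need both to supply the missing perspectivity (e.g., via something like the paper's observation (1)) and to orthogonalize the decomposition itself.
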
 
\begin{proof}
We first observe that the following hold in any modular ortholattice.
\begin{itemize}
\item[(1)] If  $v \oplus b=1$ and $v^\perp  \cap b=0$
then $v^\perp \sim^\perp v'$ for some $v' \leq v$.
\item[(2)]  Assume $u\oplus a=1$ 
and $a \sim^\perp a'$ for some $a'\leq u$.
Then there are $d,f$ such that $1=u\oplus^\perp d \oplus^\perp f$,
$d \sim^\perp d'$ and $e \sim^\perp e'$ for
some $d' \leq u$ and $e'\leq u+d$.
\end{itemize}
(1) follows from $v^\perp \sim_b v\cap (v^\perp +b)$. 
To prove (2), put $d:=a \cap u^\perp$ and $v:=u+d$. 
Then $v=u\oplus^\perp d$ and $d\sim^\perp d'$ for
some $d' \leq a' \leq u$. Moreover, $ a \cap v^\perp = a \cap d^\perp \cap
u^\perp = d \cap d^\perp =0$.
Now, put $b:= a \cap d^\perp$, the orthocomplement of
$d$ in the ortholattice $[0,a]$; thus, $b\oplus d=a$
and $v\oplus b=1$. 
On the other hand, from $b \leq a$ it follows $b \cap v^\perp=0$. 
Now, $1=u\oplus d \oplus v^\perp$ and  (2) follows applying (1). 

Finally, observe 
that (2)  deals with the case $m=2$ as well with the
inductive step from $m-1$ to $m$.
The second claim follows from Facts~\ref{f2} and \ref{f11}
\end{proof} 
A \emph{representation} of an ortholattice $L$ in an
inner product space $V$ 
is an embedding $\eta$ of
the lattice  $L$ into $\Lat(V)$
such that $\eta(a^\perp)=\eta(a)^\perp$ for all $a \in L$.
In particular, $\eta(L)$ is a modular sub-ortholattice
of the (in general, non-modular)  lattice of all closed 
subspaces of $V$.

\begin{fact}\label{f12}
Given a $*$-regular ring $R$,
the lattice $\Lat(R)$ expands to an
ortholattice $\Lat^\perp(R)$ 
defining $(aR)^\perp =(1-e)R$
where $e\in P(R)$ such that $aR=eR$.
In particular, for $e,f \in P(R)$ one has 
$eR\subseteq  fR$ if and only if $fe=e$.
\end{fact}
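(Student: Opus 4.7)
My plan is to first establish the second (``In particular'') clause, as it is the algebraic engine behind everything else. For projections $e, f$, the inclusion $eR \subseteq fR$ is equivalent to $e \in fR$, i.e.\ $e = fr$ for some $r$; then $fe = f^2 r = fr = e$, and conversely $fe = e$ places $e = fe$ into $fR$. The key point, genuinely using the $*$-structure, is that for projections the conditions $fe = e$ and $ef = e$ are equivalent: applying the involution to $fe = e$ yields $e^* f^* = e^*$, which is $ef = e$ since $e, f$ are self-adjoint.

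With this in hand, I would verify that $(aR)^\perp := (1-e)R$, where $e$ is the unique projection with $aR = eR$ (existence and uniqueness having been recalled at the beginning of Section 3), gives an ortholattice expansion of $\Lat(R)$. Well-definedness follows from the uniqueness of $e$. The complementation law $R = eR \oplus (1-e)R$ is immediate: $1 = e + (1-e)$ gives the sum, and any $x \in eR \cap (1-e)R$ written as $x = er = (1-e)s$ satisfies $x = ex = e(1-e)s = 0$. Since $1-e$ is again a projection and $1 - (1-e) = e$, the operation squares to the identity. Finally, for order reversal: if $eR \subseteq fR$ then $fe = e$ and hence $ef = e$, so $(1-e)(1-f) = 1 - e - f + ef = 1 - f$; by the characterization applied to the projections $1-f$ and $1-e$, this says exactly $(1-f)R \subseteq (1-e)R$.

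The only substantive step --- and the only place the $*$-structure is genuinely needed rather than mere idempotence --- is the symmetrization $fe = e \Leftrightarrow ef = e$, which can fail for arbitrary idempotents but holds automatically when $e^* = e$ and $f^* = f$. Everything else reduces to arithmetic with the decomposition $1 = e + (1-e)$ together with the uniqueness of the generating projection.
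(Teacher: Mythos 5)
Your proof is correct and complete; the paper states this Fact without proof (it is a standard observation about $*$-regular rings, cf.\ Berberian), and your argument---reducing everything to $eR\subseteq fR\Leftrightarrow fe=e$ and the symmetrization $fe=e\Leftrightarrow ef=e$ obtained by applying the involution to self-adjoint idempotents---is exactly the expected one. You also correctly isolate the one place where the $*$-structure is genuinely needed, namely passing from $fe=e$ to $ef=e$ in the verification that $eR\mapsto(1-e)R$ is order reversing.
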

For $e,f \in P(R)$, we  write $e \perp f$ if $eR \perp fR$;
that is, $fe=0=ef$.
Now, in view of Fact~\ref{f9},  Fact~\ref{f1} transfers as follows.
\begin{fact}\label{f13} 
If $\iota$ is a representation of the $*$-regular ring $R$ 
in the inner product space $V$ then 
$\eta(aR) =\im \iota(a)$, 
defines  a representation of the ortholattice  $\Lat^\perp(R)$ in $V$.
\end{fact}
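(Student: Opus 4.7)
The plan is to combine Fact~\ref{f1} with the characterization of projections in $\End^*(V)$ given in Fact~\ref{f9} and the description of the orthocomplement in $\Lat^\perp(R)$ given in Fact~\ref{f12}.

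First, I would note that Fact~\ref{f1} already delivers the fact that $\eta$ is a lattice embedding of $\Lat(R)$ into $\Lat(V)$, so only the compatibility with orthocomplementation remains. For this, given $a\in R$ pick the unique projection $e\in P(R)$ with $aR=eR$; then $\im\iota(a)=\im\iota(e)$ and, by Fact~\ref{f12}, $(aR)^\perp=(1-e)R$, so the task reduces to showing
\[
\im\iota(1-e) \;=\; (\im\iota(e))^\perp.
\]

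Next, since $\iota$ is an embedding of $*$-rings and $e$ is a projection in $R$, the image $\iota(e)$ is a projection in $\End^*(V)$. By Fact~\ref{f9}, writing $U=\im\iota(e)$, one has $\iota(e)=\pi_U$. Therefore
\[
\iota(1-e) \;=\; {\sf id}_V-\iota(e) \;=\; {\sf id}_V-\pi_U \;=\; \pi_{U^\perp},
\]
whose image is $U^\perp=(\im\iota(e))^\perp$. This yields $\eta((1-e)R)=\eta(eR)^\perp=\eta(aR)^\perp$, which is exactly the required identity.

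There is no real obstacle here; everything has been set up in Facts~\ref{f1}, \ref{f9}, and \ref{f12}. The only minor point worth spelling out is the well-definedness step $\im\iota(a)=\im\iota(e)$ when $aR=eR$, which is immediate from $a=er$ and $e=as$ for suitable $r,s\in R$ together with the fact that $\iota$ is a ring homomorphism.
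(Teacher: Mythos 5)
Your argument is correct and matches the paper's intent exactly: the paper gives no explicit proof, merely stating that Fact~\ref{f1} "transfers in view of Fact~\ref{f9}", and your write-up is precisely the verification that this entails -- reducing to a projection $e$ with $aR=eR$, using that $\iota$ preserves $*$ so that $\iota(e)=\pi_U$ by Fact~\ref{f9}, and concluding $\im\iota(1-e)=U^\perp$. Nothing is missing; the well-definedness remark at the end is a sensible addition.
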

In the presence of orthogonal semiframes, we will
relate such representations the other way round.

\section{Main Lemma}

\begin{lem}\label{l2}
Given a $*$-regular ring $R$, an orthogonal semiframe $\Phi$
in $\Lat^\perp(R)$,
an inner product  space $V$, 
and representation $\iota$ of the ring $R$in the vector space $V$, 
then $\iota$ is a representation of the $*$-ring $R$ 
within $V$, provided that
 $\eta:\Lat^\perp(R) \to \Lat(V)$, 
$\eta(aR):=\im  \iota(a)$, $a \in R$,
defines  an ortholattice representation in the inner product space $V$. 
\end{lem}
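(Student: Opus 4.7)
The plan is to show that for every $a \in R$ the operator $\iota(a)$ admits an adjoint in $\End^*(V)$ equal to $\iota(a^*)$; since the set of $a$ satisfying this identity is closed under $+$, $\cdot$ and $*$, it will suffice to verify it on a suitable generating family. First I would handle a projection $e \in P(R)$: the hypothesis that $\eta$ is an ortholattice representation, applied to the complementary pair $eR$ and $(1-e)R$, gives $\im\iota(1-e) = (\im\iota(e))^\perp$, so the direct sum $V = \im\iota(e) \oplus \im\iota(1-e)$ is in fact orthogonal; by Fact~\ref{f9} this forces $\iota(e) = \pi_{V_e}$ with $V_e := \im\iota(e)$, hence $\iota(e) \in \End^*(V)$ and $\iota(e)^* = \iota(e) = \iota(e^*)$. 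Applied to the semiframe projections $e_i$ with $a_i = e_iR$, and to projections $f_i$ with $f_iR$ witnessing $a_i \sim^\perp f_iR$, this yields an orthogonal decomposition $V = V_0 \oplus^\perp \cdots \oplus^\perp V_{k-1}$ together with closed subspaces $W_i := \im\iota(f_i)$ satisfying $V_i \perp W_i$.

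Next I would handle the partial isometries realising the perspectivities. Since $R$ is $*$-regular and $e_i \sim f_i$ with $e_i \perp f_i$, there exists $w_i \in e_iRf_i$ with $w_iw_i^* = e_i$ and $w_i^*w_i = f_i$. The key identity $\iota(w_i)^* = \iota(w_i^*)$ is extracted from the self-adjoint element $s_i := e_i + f_i + w_i + w_i^* \in R$. A direct Peirce-type computation using $e_i \perp f_i$, $w_i^2 = 0 = (w_i^*)^2$, $w_iw_i^* = e_i$ and $w_i^*w_i = f_i$ gives $s_i^2 = 2s_i$. In characteristic $\neq 2$ the element $s_i/2$ is both idempotent and self-adjoint, hence a projection in $R$, so by the first step $\iota(s_i) = 2\iota(s_i/2)$ lies in $\End^*(V)$ and is self-adjoint; subtracting the already self-adjoint $\iota(e_i) + \iota(f_i)$ then shows that $\iota(w_i) + \iota(w_i^*)$ is self-adjoint on $V$. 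Since $\iota(w_i)$ is supported on $W_i$ with image in $V_i$ while $\iota(w_i^*)$ is supported on $V_i$ with image in $W_i$, the restriction of $\iota(w_i) + \iota(w_i^*)$ to the closed subspace $V_i \oplus^\perp W_i$ has the anti-diagonal block form $\left(\begin{smallmatrix} 0 & T \\ S & 0 \end{smallmatrix}\right)$ with $T = \iota(w_i)|_{W_i}$ and $S = \iota(w_i^*)|_{V_i}$; self-adjointness forces $T^* = S$, that is $\iota(w_i)^* = \iota(w_i^*)$. In characteristic $2$ the same computation gives $s_i^2 = 0$, whence $s_is_i^* = 0$ and $s_i = 0$ by $*$-regularity; the distinct Peirce components then force $e_i = f_i = w_i = 0$ for every $i$, so $R = 0$ and the lemma is trivial.

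Finally I would extend the identity to every $a \in R$ via the Peirce decomposition $a = \sum_{i,j} e_iae_j$. For an off-diagonal component $b = e_iae_j$ with $i \neq j$ (so $e_i \perp e_j$), a polar-style decomposition in the $*$-regular ring produces $b = vp$ where $v \in R$ is a partial isometry between orthogonal projections $c := \mathrm{LP}(b) \leq e_i$ and $d := \mathrm{RP}(b) \leq e_j$, and $p \in dRd$. The argument of the previous paragraph, applied verbatim to $v$ (with $c + d + v + v^*$ in place of $s_i$), yields $\iota(v)^* = \iota(v^*)$; together with $\iota(b) = \iota(v)\iota(p)$ this reduces the problem to establishing $\iota(p)^* = \iota(p^*)$ in the diagonal corner $dRd \subseteq e_jRe_j$. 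For a diagonal component $b \in e_iRe_i$ one writes $b = (bw_i)w_i^*$; since $bw_i \in e_iRf_i$ is off-diagonal with respect to the orthogonal pair $e_i \perp f_i$, the off-diagonal case just treated, together with $\iota(w_i^*)^* = \iota(w_i)$, gives $\iota(b)^* = \iota(b^*)$. The principal obstacle is to close the resulting cycle, which at each pass descends to a (possibly strict) sub-corner: one must show that $*$-compatibility propagates to the ``residual'' self-adjoint elements of every diagonal corner $dRd$. This is the content of the crucial Claims~5.4 and~5.6 for which the author thanks the referee; I expect the argument there to combine an induction along the sub-corners with the $*$-isomorphism $\phi\colon e_iRe_i \to f_iRf_i$, $a \mapsto w_i^*aw_i$, and the full strength of the ortholattice hypothesis on $\eta$.
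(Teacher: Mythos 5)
Your proposal has a genuine gap at its core: the existence of the partial isometries $w_i\in e_iRf_i$ with $w_iw_i^*=e_i$ and $w_i^*w_i=f_i$. In a general $*$-regular ring, the lattice-theoretic perspectivity $e_iR\sim^\perp f_iR$ furnished by the orthogonal semiframe does \emph{not} yield a Murray--von Neumann equivalence; the existence of partial isometries implementing equivalences (and likewise the polar decomposition $b=vp$ you invoke in the last step) is an additional axiom in the theory of $*$-rings, not a consequence of $*$-regularity. The paper is structured precisely to avoid this: from $eR\sim_{gR}fR$ one extracts only a right-module isomorphism $\omega\colon eR\to fR$ and hence an element $c=\omega(e)e\in fRe$ that is merely \emph{left-cancellable} on $eRe$ (Claim~\ref{c3}); nothing is assumed about $cc^*$ or $c^*c$. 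Everything else is then driven not by self-adjoint idempotents in $R$ but by the hypothesis that $\eta$ preserves orthogonality: for $e\perp f$ and $a\in fRe$ with $b=a^*$ one has the purely algebraic identity $(e-a)^*(f+b)=0$, hence $(e-a)R\perp(f+b)R$ in $\Lat^\perp(R)$, hence $\im(\iota(e)-\iota(a))\perp\im(\iota(f)+\iota(b))$ in $V$, and an elementary sesquilinear computation (Claim~\ref{c1}) converts this back into $\iota(b)=\iota(a)^*$. This settles all off-diagonal corners $e_jRe_i$, $i\neq j$, in one stroke, with no partial isometry and no characteristic restriction. The diagonal corners $e_iRe_i$ are then handled by multiplying $a\in e_iRe_i$ on the left by the cancellable $c\in f_iRe_i$, applying the off-diagonal case to $c$ and $ca$, and cancelling $\varepsilon=\iota(c)$ via Claim~\ref{c4}.

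Two further problems, even granting the partial isometries: your appeal to ``characteristic $\neq 2$'' presupposes that $2$ is invertible in $R$, which a general $*$-regular ring need not satisfy (so $s_i/2$ need not exist, and $R$ need not split into a char-$2$ part and a part where $2$ is invertible in a way your argument can use); and your reduction scheme is circular by your own admission --- off-diagonal components are reduced to diagonal elements of a sub-corner $dRd$, which are in turn reduced to off-diagonal ones, with no terminating induction supplied. The paper's Claims~\ref{c2}--\ref{c5} break exactly this cycle: the off-diagonal case is proved outright from the orthogonality hypothesis, and the diagonal case is derived from it in a single step rather than by descent through sub-corners. So while your first step (projections go to orthogonal projections via Fact~\ref{f9}) and your closure-under-operations framing are sound and consistent with the paper, the engine of your argument rests on structure that $*$-regular rings do not possess.
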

Recall that $\eta(aR)=\eta(bR)$ if $aR=bR$
(and we may write $\eta(a):= \eta(aR)$)
and that $\eta$ is a lattice representation in view of Fact~\ref{f1}.
Thus, the point is
to show  $\iota(a^*)=\iota(a)^*$ for all $a \in R$
using  the fact that 
$\eta$  preserves orthogonality.
For the remainder of this section we assume the
hypotheses of the Lemma.

\begin{claim}\label{c1}
Consider closed subspaces $U,W$ of $V$
such that  $U\perp W$ 
and $\varphi,\psi \in \End(V)$ such that $\varphi=\pi_W \varphi \pi_U$
and $\psi=\pi_U\psi \pi_W$.
Then $\psi=\varphi^*$ if and only if 
$\im (\pi_U -\varphi)\perp \im (\pi_W+\psi)$.
\end{claim}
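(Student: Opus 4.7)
The plan is to translate orthogonality of the two image subspaces into a single bilinear identity on $U\times W$, and then to recognize that identity as precisely the adjoint relation $\psi=\varphi^*$.

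First I would pin down the two images. From $\varphi=\pi_W\varphi\pi_U$ one reads off $\im\varphi\subseteq W$ and $\varphi|_{U^\perp}=0$, so on $U$ we have $(\pi_U-\varphi)(u)=u-\varphi(u)$ while $\pi_U-\varphi$ vanishes on $U^\perp$. Hence $\im(\pi_U-\varphi)=\{u-\varphi(u)\mid u\in U\}$, and symmetrically $\im(\pi_W+\psi)=\{w+\psi(w)\mid w\in W\}$.

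Next I would expand $\skv{u-\varphi(u)}{w+\psi(w)}$ into its four summands. The term $\skv{u}{w}$ vanishes since $U\perp W$. The term $\skv{\varphi(u)}{\psi(w)}$ vanishes as well: $\varphi(u)\in W$ and $\psi(w)\in U$ give $\skv{\psi(w)}{\varphi(u)}=0$ by $U\perp W$, and orthosymmetry then yields $\skv{\varphi(u)}{\psi(w)}=0$. Thus orthogonality of the two images is equivalent to the identity
\[
\skv{\varphi(u)}{w}=\skv{u}{\psi(w)}\quad\text{for all }u\in U,\,w\in W.
\]

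Finally I would upgrade this $U\times W$-identity to one on all of $V\times V$, using closedness of $U$ and $W$. For arbitrary $x=u+u'$ with $u\in U$, $u'\in U^\perp$ and $y=w+w'$ with $w\in W$, $w'\in W^\perp$, one has $\varphi(x)=\varphi(u)\in W$ and $\psi(y)=\psi(w)\in U$; orthosymmetry forces the cross terms $\skv{\varphi(u)}{w'}$ and $\skv{u'}{\psi(w)}$ to vanish, so $\skv{\varphi(x)}{y}=\skv{\varphi(u)}{w}$ and $\skv{x}{\psi(y)}=\skv{u}{\psi(w)}$. The displayed identity therefore coincides with the condition $\psi=\varphi^*$, uniqueness of the adjoint being ensured by anisotropy. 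The only real subtlety lies in bookkeeping with orthosymmetry: since $\skv{\cdot}{\cdot}$ is not assumed Hermitian, each cross term must be shown to vanish by invoking the orthosymmetry axiom rather than by a symmetry of the form.
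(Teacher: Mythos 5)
Your proof is correct and follows essentially the same route as the paper: expand $\skv{(\pi_U-\varphi)(v)}{(\pi_W+\psi)(w)}$, observe that the two terms killed by $U\perp W$ (together with $\im\varphi\subseteq W$, $\im\psi\subseteq U$) drop out, and identify the surviving identity with the adjoint relation. The only difference is bookkeeping: you restrict to $U\times W$ and then extend back to $V\times V$, whereas the paper computes directly with $\pi_U(v)$ and $\pi_W(w)$ for arbitrary $v,w\in V$.
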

\begin{proof}
This follows immediately since for all  $v,w \in V$ one has
\[\skv{(\pi_U-\varphi)(v)}{(\pi_W+ \psi)(w)}
=\skv{\pi_U(v)}{\psi(w)} - \skv{\varphi(v)}{\pi_W(w)} \]
\end{proof}

\begin{claim}\label{c2}
If $e \perp f$ in $P(R)$ and $a \in fRe$
then  $\iota(a^*)=\iota(a)^*$.
\end{claim}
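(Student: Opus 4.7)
The plan is to apply Claim~\ref{c1} to $\varphi = \iota(a)$ and $\psi = \iota(a^*)$, with $U = \eta(eR)$ and $W = \eta(fR)$. The crucial preliminary step is to show that for every projection $p \in P(R)$, the endomorphism $\iota(p)$ coincides with the orthogonal projection $\pi_{\eta(pR)}$. Indeed, $\iota(p)$ is an idempotent with image $\im \iota(p) = \eta(pR)$ and kernel $\im \iota(1-p) = \eta((1-p)R)$. By Fact~\ref{f12}, $(1-p)R = (pR)^\perp$, and since $\eta$ preserves orthocomplements, $\eta((1-p)R) = \eta(pR)^\perp$. Hence $\iota(p)$ is an idempotent whose kernel is the orthogonal complement of its image, which forces $\iota(p) = \pi_{\eta(pR)}$; in particular $\eta(pR)$ is closed.

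Now set $U = \eta(eR)$, $W = \eta(fR)$; since $e \perp f$, we have $U \perp W$. From $a = fae$ we obtain $\varphi = \iota(f)\varphi\iota(e) = \pi_W \varphi \pi_U$, and similarly $\psi = \pi_U \psi \pi_W$ since $a^* \in eRf$. Claim~\ref{c1} then reduces the assertion $\iota(a^*) = \iota(a)^*$ to
\[
\im(\pi_U - \varphi) \perp \im(\pi_W + \psi),
\]
which, via $\pi_U - \varphi = \iota(e - a)$ and $\pi_W + \psi = \iota(f + a^*)$ together with the fact that $\eta$ preserves orthogonality, is equivalent to the ortholattice relation $(e - a)R \perp (f + a^*)R$ in $\Lat^\perp(R)$.

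To verify this last relation I would first record a general fact, immediate from Fact~\ref{f12}: for $x, y \in R$, one has $xR \perp yR$ if and only if $x^*y = 0$. Then
\[
(e - a)^*(f + a^*) = ef + ea^* - a^*f - (a^*)^2,
\]
where $ef = 0$ because $e \perp f$; from $a = fae$ one gets $a^* = ea^*f$, so $ea^* = a^* = a^*f$; and $(a^*)^2 = ea^*(fe)a^*f = 0$, again by $fe = 0$. Hence the expression vanishes. The main obstacle is the first step: turning the algebraic idempotents $\iota(p)$ into the geometric orthogonal projections $\pi_{\eta(pR)}$. Everything after that is bookkeeping and a short computation, but this is exactly the point at which the ortholattice hypothesis on $\eta$ enters and makes Claim~\ref{c1} applicable.
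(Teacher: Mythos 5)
Your proposal is correct and follows essentially the same route as the paper: reduce to Claim~\ref{c1} with $\varphi=\iota(a)$, $\psi=\iota(a^*)$, $U=\im\iota(e)$, $W=\im\iota(f)$, and verify the orthogonality $(e-a)R\perp(f+a^*)R$ by the computation $(e-a)^*(f+a^*)=0$. You merely make explicit two steps the paper leaves implicit, namely that $\iota(p)=\pi_{\eta(pR)}$ for projections $p$ and the algebraic identity $(e-a)^*(f+a^*)=0$, both of which check out.
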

\begin{proof}
Put  $b=a^*$. Then $b \in eRf$ and
$(e-a)^*(f+b)=0$, that  is $(e-a)R \perp (f+b)R$. 
It follows $\eta(e-a) \perp \eta(f+b)$. Now,
$\eta(e-a)= \im \iota(e-a)= \im(\iota(e) - \iota(a))$
and $\eta(f+b)= \im(\iota(f) + \iota(b))$
and Claim~\ref{c1} applies with $\varphi=\iota(a)$,
$U=\im \iota(e)$, $\psi=\iota(b)$,
$W=\im \iota(f)$. 
\end{proof}

\begin{claim}\label{c3}
If $eR \sim fR$ in $\Lat^\perp(R)$
for  idempotents
$e,f \in R$  then there is $c \in fRe$ such that
$cx=cy$ implies $x=y$ for all $x,y \in eRe$.
\end{claim}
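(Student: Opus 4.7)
The plan is to promote the perspectivity $eR \sim fR$ in $\Lat(R)$ to an isomorphism of right $R$-modules, and then translate this back into ring-theoretic data via the standard correspondence between module homomorphisms and elements of $fRe$.

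First I would recall that right $R$-module homomorphisms $\varphi : eR \to fR$ correspond bijectively to elements $c \in fRe$, via $c = \varphi(e)$ and $\varphi(x) = cx$ for $x \in eR$ (the conditions $c = fc$ and $c = ce$ being automatic from $c \in fR$ and $c = \varphi(e \cdot e) = c \cdot e$). Under this correspondence, composition of homomorphisms corresponds to ring multiplication of the associated elements, and the identity of $eR$ corresponds to $e$. Consequently, any module isomorphism $\varphi : eR \to fR$ yields $c \in fRe$ and $c' \in eRf$ with $c'c = e$ and $cc' = f$.

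Next I would produce such an isomorphism from the hypothesis. By definition of $\sim$, there is a right ideal $X$ with $eR + fR = eR \oplus X = fR \oplus X$. The canonical projection $\pi : eR + fR \to (eR + fR)/X$ restricts to right $R$-module isomorphisms on both $eR$ and $fR$: injectivity follows from $eR \cap X = 0 = fR \cap X$, and surjectivity from the fact that $X$ is a complement in each case. Composing $\pi\!\restriction_{eR}$ with the inverse of $\pi\!\restriction_{fR}$ yields the desired $\varphi : eR \to fR$.

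Finally, given the $c \in fRe$ and $c' \in eRf$ extracted above, injectivity on $eRe$ is immediate: for $x, y \in eRe$ with $cx = cy$, left-multiplying by $c'$ gives $c'cx = c'cy$, i.e.\ $ex = ey$; since $x = ex$ and $y = ey$ this forces $x = y$. I do not foresee a serious obstacle here; the only slightly delicate point is the perspectivity-to-isomorphism step, which is essentially the second isomorphism theorem applied in the submodule lattice and uses nothing beyond modularity.
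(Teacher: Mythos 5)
Your proposal is correct and follows essentially the same route as the paper: both convert the perspectivity $eR\sim_{X} fR$ into the right-module isomorphism $eR\to fR$ given by projecting along $X$, represent it as left multiplication by $c=\varphi(e)\in fRe$, and deduce cancellation on $eRe$ (the paper directly from injectivity of the isomorphism, you via the inverse element $c'$ with $c'c=e$, which is an equivalent formulation).
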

\begin{proof}
Assume $eR \sim_{gR} fR$;
then $\omega(x)=y \Leftrightarrow x-y\in gR$
defines an  isomorphism
$\omega:eR \to fR$ of right $R$-modules.
Put $c= \omega(e)e$ and observe that
$\omega(x)=\beta(ex)=\beta(e)x =  cx$
for all $x \in eRe$.  Thus, assuming $cx=cy$ for given
$x,y \in eRe$ it follows
$\omega(x)=cx=cy=
\omega(y)$ whence $x=y$.  
\end{proof}

\begin{claim}\label{c4}
Consider closed subspaces $U\perp W$ of $V$ and
$\varepsilon \in \End^*(V)$ such that
$\varepsilon\circ \xi = \varepsilon\circ \chi$ 
implies $\xi= \chi$ for all
$\xi,\chi \in \pi_U\circ\End^*(V)\circ\pi_U$. 
 Then $\varphi^*=\psi$ provided that
$\varphi,\psi \in \pi_U \circ\End^*(V)\circ\pi_U$ and
 $(\varepsilon\circ  \varphi)^*=
\psi \circ \varepsilon^*$.
\end{claim}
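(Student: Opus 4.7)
The plan is straightforward once one observes that, because $U$ is closed, the orthogonal projection $\pi_U$ is self-adjoint (by Fact~\ref{f9}), so the corner $\pi_U\circ\End^*(V)\circ\pi_U$ is closed under the involution of $\End^*(V)$: if $\psi=\pi_U\circ\alpha\circ\pi_U$ with $\alpha\in\End^*(V)$, then $\psi^* = \pi_U\circ\alpha^*\circ\pi_U$ again lies in this corner, since $\alpha^*\in\End^*(V)$.

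Next I would take adjoints of the given relation. Expanding $(\varepsilon\circ\varphi)^* = \varphi^*\circ\varepsilon^*$ turns the hypothesis into $\varphi^*\circ\varepsilon^* = \psi\circ\varepsilon^*$. Applying $*$ once more and using $(\varepsilon^*)^*=\varepsilon$ yields
\[ \varepsilon\circ\varphi \;=\; \varepsilon\circ\psi^*. \]
Since both $\varphi$ and $\psi^*$ lie in $\pi_U\circ\End^*(V)\circ\pi_U$, the left-cancellation hypothesis on $\varepsilon$ applies with $\xi=\varphi$ and $\chi=\psi^*$ to give $\varphi=\psi^*$, equivalently $\varphi^*=\psi$.

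There is no real obstacle; the claim amounts to formal manipulation of adjoints together with the observation that $\pi_U\circ\End^*(V)\circ\pi_U$ is closed under $*$. The subspace $W$ does not enter this argument, so its role is presumably to coordinate with how the claim will be invoked later in the proof of Lemma~\ref{l2} (in combination with Claim~\ref{c1}, which is where orthogonality of images to a subspace perpendicular to $U$ comes in).
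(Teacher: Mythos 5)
Your argument is correct and is essentially the paper's own proof: take adjoints to rewrite the hypothesis as $\varepsilon\circ\varphi=\varepsilon\circ\psi^*$ and then cancel $\varepsilon$ on the left. The only difference is that you make explicit (via self-adjointness of $\pi_U$) why $\psi^*$ lies in the corner $\pi_U\circ\End^*(V)\circ\pi_U$, a point the paper leaves implicit.
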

\begin{proof}
From $\varphi^* \circ \varepsilon^*= (\varepsilon \circ  \varphi)^*
=\psi \circ \varepsilon^*$ 
it follows $\varepsilon \circ\varphi= \varepsilon \circ \psi^*$,
whence  $\varphi= \psi^*$ and $\varphi^* =\psi$.
\end{proof}
\begin{claim}\label{c5}
Given $e,f$ as in Claim~\ref{c3} such that $e \perp f$ one has 
 $\iota(a^*) =\iota(a)^*$ for all $a \in eRe$.
\end{claim}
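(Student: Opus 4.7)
The plan is to set $U=\im\iota(e)$ and $W=\im\iota(f)$ and invoke Claim~\ref{c4} with $\varepsilon=\iota(c)$ (for $c$ produced by Claim~\ref{c3}), $\varphi=\iota(a)$, and $\psi=\iota(a^*)$, so that the conclusion $\varphi^*=\psi$ is exactly the statement of the claim. The subspaces $U,W$ are closed and mutually orthogonal, since $\eta$ is an ortholattice representation and $e\perp f$; moreover $\iota(e)=\pi_U$, because $\iota(e)$ is an idempotent endomorphism with image $U$ and kernel $\im\iota(1-e)=\eta((1-e)R)=U^\perp$ by Fact~\ref{f12}. Analogously $\iota(f)=\pi_W$.

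The key preparatory step is to refine Claim~\ref{c3}: its proof produces $c=\omega(e)$ for a right $R$-module isomorphism $\omega\colon eR\to fR$, and the inverse $\omega^{-1}$ likewise yields a companion $c'=\omega^{-1}(f)\in eRf$ satisfying $c'c=e$ and $cc'=f$. Claim~\ref{c2}, applied both to $c\in fRe$ and to $c'\in eRf$ (using the symmetry of $e\perp f$), gives $\iota(c),\iota(c')\in\End^*(V)$. For $a\in eRe$ one has $a^*\in eRe$ as well, hence $ca, ca^*\in fRe$, and Claim~\ref{c2} again gives $\iota(ca),\iota(ca^*)\in\End^*(V)$. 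Since $ea=a$ and $e=c'c$, we have $a=c'(ca)$, so $\iota(a)=\iota(c')\iota(ca)\in\End^*(V)$, and likewise $\iota(a^*)\in\End^*(V)$. Combined with $\iota(a)=\iota(e)\iota(a)\iota(e)=\pi_U\circ\iota(a)\circ\pi_U$, this places $\iota(a)$ and $\iota(a^*)$ in $\pi_U\circ\End^*(V)\circ\pi_U$.

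The remaining hypotheses of Claim~\ref{c4} are then immediate. Left cancellation by $\iota(c)$ on the corner $\pi_U\circ\End^*(V)\circ\pi_U$ follows from $\iota(c')\iota(c)=\iota(c'c)=\iota(e)=\pi_U$: any $\xi$ in this corner satisfies $\pi_U\xi=\xi$, so $\iota(c)\xi=0$ forces $\xi=\iota(c')\iota(c)\xi=0$. The functional equation is verified using Claim~\ref{c2} on $ca\in fRe$: $(\iota(c)\iota(a))^* = \iota(ca)^* = \iota((ca)^*) = \iota(a^*c^*) = \iota(a^*)\iota(c)^* = \psi\circ\varepsilon^*$. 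Claim~\ref{c4} then yields $\varphi^*=\psi$, i.e., $\iota(a)^*=\iota(a^*)$. The main technical point is the extraction of the companion $c'$ from the construction underlying Claim~\ref{c3}; without it, one could not reduce the question to off-diagonal elements of $fRe$, which is precisely where Claim~\ref{c2} can be applied directly.
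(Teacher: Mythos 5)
Your proposal is correct and follows the same route as the paper: apply Claim~\ref{c2} to the off-diagonal elements $c, ca \in fRe$ to get the identity $(\iota(c)\iota(a))^* = \iota(a^*)\iota(c)^*$, then conclude via Claim~\ref{c4} with $\varepsilon=\iota(c)$, $\varphi=\iota(a)$, $\psi=\iota(a^*)$. The one substantive difference is that you explicitly verify the hypotheses of Claim~\ref{c4} --- that $\iota(a),\iota(a^*)$ lie in $\pi_U\circ\End^*(V)\circ\pi_U$ and that $\iota(c)$ is left-cancellable on that whole corner, not merely on $\iota(eRe)$ as Claim~\ref{c3} literally provides --- by extracting the companion $c'\in eRf$ with $c'c=e$ and $cc'=f$ from the isomorphism $\omega$ underlying Claim~\ref{c3}; the paper leaves these verifications implicit, and your argument closes them cleanly.
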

\begin{proof} 
We put $b=a^*$   and have to show $\iota(b) =\iota(a)^*$.
Choose $c$ according to Claim~\ref{c3}.
By Claim~\ref{c2} one has $\iota(c^*)=\iota(c)^*$
and $\iota((ca)^*)= (\iota(ca))^*$ since $c,ca \in fRe$.
It follows $\iota(b)\iota(c)^* =
\iota(b) \iota(c^*)= \iota(bc^*) =\iota((ca)^*)= (\iota(ca))^* 
=(\iota(c) \iota(a))^*  $ whence
$\iota(b) =\iota(a)^*$
applying Claim~\ref{c4} with
$U=\im e$, $W=\im f$,  $\varphi=\iota(a)$,
$\psi= \iota(b)$, and $\varepsilon=\iota(c)$.
\end{proof}

\begin{proof} of the Lemma. We fix an orthogonal semiframe $\Phi$
of $\Lat^\perp(R)$, that
is pairwise orthogonal projections $e_i$, $0\leq i <k$,
such that $\bigoplus_{i=0}^{k-1}e_iR=R$ and for each $i<k$
there are 
$f_i, g_i \in P(R)$ with   
$e_iR \sim^\perp  f_iR$.
By Claims~\ref{c2} and \ref{c5} one has $\iota(a^*)= \iota(a)^*$
for all $a \in e_jRe_i$, $i,j<k$. 

Now, $e_ie_j=0$ for $i\neq j$ since $e_i\perp e_j$.
Thus $e=\sum_{i0}^{k-1}e_i$ is a projection and $eR=R$ whence
$e=1$ by uniqueness. It follows, for each $a \in R$,
that $a=\sum_{i,j=0}^{k-1} e_jae_i$
and
$a^* = \sum_{i,j=0}^{k-1} e_ja^*e_i$. Thus
$\iota(a)^*
= (\sum_{i,j=0}^{k-1} \iota(e_ja e_i))^*
= \sum_{i,j=0}^{k-1} (\iota(e_ja e_i))^*
=\sum_{i,j=0}^{k-1} \iota( (e_jae_i)^*)
=\iota(\sum_{i,j=0}^{k-1} (e_jae_i)^*)
=\iota(\sum_{i,j=0}^{k-1} e_ia^*e_j)
=\iota(a^*) $.
\end{proof}

\section{Results}

Facts~\ref{f7}, \ref{f3}, and \ref{f2} yield the following.
\begin{fact}
A simple $*$-regular ring admits a large partial $n$-frame
if $\Lat(R)$ is of height at least $n$.
\end{fact}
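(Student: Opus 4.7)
The plan is to chain the three cited facts in the order stated; no new ingredients are needed. Let $R$ be a simple $*$-regular ring with $\Lat(R)$ of height at least $n$. First, I would invoke Fact~\ref{f7} to conclude that the ring reduct of $R$ is also simple, so the lattice of two-sided ideals of $R$ consists of $0$ and $R$ alone.

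Next, I would apply Fact~\ref{f3} (Wehrung) to translate this triviality of the ideal lattice into triviality of the congruence lattice of $\Lat(R)$. Hence $\Lat(R)$ is a simple lattice; since $R$ is regular, $\Lat(R)$ is automatically complemented modular, and by hypothesis it has height at least $n$.

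Finally, Fact~\ref{f2} applied to the simple complemented modular lattice $\Lat(R)$ of height at least $n$ directly produces the desired large partial $n$-frame.

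There is essentially no obstacle in this argument: every step is an immediate appeal to a cited result. The only thing worth checking is the uniform meaning of ``simple'' across the three facts, which amounts to the standard equivalence between simplicity of a structure and triviality of the appropriate congruence or ideal lattice.
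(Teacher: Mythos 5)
Your proof is correct and follows exactly the route the paper intends: the paper's entire justification is the single line ``Facts~\ref{f7}, \ref{f3}, and \ref{f2} yield the following,'' which is precisely your chain of reducing $*$-ring simplicity to ring simplicity, transferring that via Wehrung's isomorphism to simplicity of the congruence lattice of $\Lat(R)$, and then invoking J\'onsson's existence result for large partial $n$-frames in simple complemented modular lattices of sufficient height. Nothing to add.
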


\begin{thm}\label{t1}
Given a $*$-regular ring $R$
such that its ortholattice $\Lat^\perp(R)$
of projections admits a large partial $n$-frame, $n \geq 3$,
and a representation $\eta$ within some inner 
product space $V$. Then there is a representation $\iota$ of
the $*$-ring $R$
within $V$ such that $\eta(a)=\im \iota(a)$ for all $a \in R$.
\end{thm}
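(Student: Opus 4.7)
The plan is to realize $\iota$ in two stages: first construct it as a ring embedding via Proposition~\ref{f6}, then promote it to a $*$-ring embedding via the Main Lemma~\ref{l2}. Both tools require coordinate-frame data, so the preliminary task is to convert the given large partial $n$-frame in $\Lat^\perp(R)$ into both a skew $n$-frame in $\Lat(R)$ (for Proposition~\ref{f6}) and an orthogonal semiframe in $\Lat^\perp(R)$ (for Lemma~\ref{l2}).

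First I would extract the frames. Since $\Lat^\perp(R)$ admits a large partial $n$-frame with $n \geq 3$, the lattice reduct $\Lat(R)$ does too, and by Fact~\ref{f2} it admits a skew $n$-frame $\Phi$. In particular, $\Phi$ may be viewed as a skew $2$-$m$-frame, so Lemma~\ref{l1} applied to the modular ortholattice $\Lat^\perp(R)$ yields an orthogonal semiframe $\Psi$ in $\Lat^\perp(R)$.

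Next, the ortholattice representation $\eta$ is in particular a lattice embedding of $\Lat(R)$ into $\Lat(V)$. Applying Proposition~\ref{f6} with the skew $n$-frame $\Phi$ produces a ring embedding $\iota: R \to \End(V)$ satisfying $\eta(aR) = \im \iota(a)$ for all $a \in R$. This $\iota$, together with the orthogonal semiframe $\Psi$ and the ortholattice representation $\eta$, fulfils exactly the hypotheses of the Main Lemma~\ref{l2}. Applying it gives $\iota(a^*) = \iota(a)^*$ for every $a \in R$, so $\iota$ is in fact an embedding of the $*$-ring $R$ into $\End^*(V)$, proving the theorem.

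The real difficulty sits upstream, in the Main Lemma; the theorem itself is a clean synthesis of the machinery already developed. The key structural point is that Lemma~\ref{l1} acts as the bridge letting one large partial $n$-frame provide both the skew $n$-frame on the lattice side, necessary to reconstruct $R$ from $\Lat(R)$ via Proposition~\ref{f6}, and the orthogonal data on the ortholattice side, necessary to see that this reconstruction respects the involution.
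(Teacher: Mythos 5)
Your proof is correct and follows exactly the paper's own argument: Fact~\ref{f2} to pass from the large partial $n$-frame to a skew $n$-frame, Proposition~\ref{f6} to obtain the ring embedding $\iota$ with $\eta(aR)=\im\iota(a)$, then Lemma~\ref{l1} to produce the orthogonal semiframe and Lemma~\ref{l2} to conclude that $\iota$ respects the involution. The only difference is that you spell out the (correct) observation that a skew $n$-frame is in particular a skew $2$-$m$-frame, which the paper leaves implicit.
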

\begin{proof}
By Fact~\ref{f2}  one has a skew $n$-frame, $n\geq 3$ 
and so Proposition~\ref{f6} provides the ring
embedding $\iota: R\to \End(V)$
such that $\eta(a)=\im \iota(a)$. Now, by Lemma~\ref{l1}
there is an orthogonal semiframe and Lemma~\ref{l2}
shows that $\iota$ is a representation of the $*$-ring $R$.
\end{proof}

\begin{cor}\label{t2}
Consider  a subdirectly irreducible $*$-regular ring $R$ such that
$\Lat^\perp(R)$ is of height at least $3$ and has a representation in the inner product space $V$.
Then the $*$-ring $R$ has a representation
within a closed subspace of some ultrapower of $V$.
\end{cor}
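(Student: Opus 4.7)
The plan is to combine Fact~\ref{f8}, Fact~\ref{f10} and Theorem~\ref{t1}. First, Fact~\ref{f8} exhibits $R$ as a homomorphic image of a $*$-regular sub-$*$-ring of an ultraproduct of the corners $eRe$, where $e$ ranges over the projections in the minimal ideal $I$ of $R$. By Fact~\ref{f10}, to reach the conclusion it suffices to produce, for each $e\in P(I)$, a representation of the $*$-ring $eRe$ in some inner product space $V_e$; if each $V_e$ is taken to be a closed subspace of $V$, the resulting ultraproduct of the $V_e$ sits inside an ultrapower of $V$, yielding exactly what the corollary asserts.

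So fix $e\in P(I)$. Fact~\ref{f8} already gives that $eRe$ is $*$-regular, and a standard corner argument shows $eRe$ is simple: any nonzero ideal $J$ of $eRe$ generates a two-sided ideal $RJR$ of $R$ containing the minimal ideal $I$ and hence $e$, from which $e\in eRJRe \subseteq J$ follows. Next, the projection ortholattice $\Lat^\perp(eRe)$ identifies naturally with the interval $[0,eR]$ of $\Lat^\perp(R)$, and restricting $\eta$ to this interval provides an ortholattice representation of $\Lat^\perp(eRe)$ in the closed subspace $\eta(eR)$ of $V$, which inherits its inner product from $V$.

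When $[0,eR]$ has height at least $3$, Theorem~\ref{t1} then yields a representation of the $*$-ring $eRe$ in $\eta(eR)$. For the remaining projections $e$, for which $[0,eR]$ has height $1$ or $2$, the corner $eRe$ is either a division $*$-ring or a simple $*$-regular ring of height $2$, and each case admits a direct representation in a small inner product space (a division $*$-ring $D$ acts on itself by left multiplication with form $\langle x,y\rangle = xy^*$, while the height-$2$ case is handled analogously using the orthogonal pair of atoms inside $\eta(eR)$). Once every $eRe$ comes equipped with a representation, Fact~\ref{f10} assembles them into the sought representation of $R$ inside a closed subspace of an ultrapower of $V$.

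The main obstacle will be these small-height corners, since Theorem~\ref{t1} requires height at least $3$ and therefore does not cover them uniformly; they must be dispatched by a separate elementary argument relying on the structure of simple $*$-regular rings of small height, after which Fact~\ref{f10} closes the proof.
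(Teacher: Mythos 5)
Your overall skeleton (Facts~\ref{f8} and \ref{f10} plus Theorem~\ref{t1} applied to corners $eRe$, $e$ a projection in the minimal ideal $I$) is the same as the paper's, and your observations that $eRe$ is simple and that $\Lat^\perp(eRe)$ is the section $[0,eR]$, represented by $\eta$ in the closed subspace $\eta(eR)$ of $V$, are exactly right. The divergence, and the gap, is in your treatment of the corners of height $1$ and $2$. The paper avoids these entirely: it takes $P$ to be the set of projections $e\in I$ with $\Lat^\perp(eRe)$ of height at least $3$, observes that every corner $fRf$, $f$ a projection in $I$, embeds into some $eRe$ with $e\in P$ (the projections of $I$ are directed and $P$ is nonempty and cofinal, using that $\Lat^\perp(R)$ has height at least $3$ and $I$ is essential), and then runs Facts~\ref{f8} and \ref{f10} over $P$ alone. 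Every $V_e$ is then a closed subspace of $V$, so the ultraproduct is a closed subspace of an ultrapower of $V$, as the statement requires.

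Your alternative handling of the small corners does not work as described. For height $2$ you propose to use ``the orthogonal pair of atoms inside $\eta(eR)$,'' but below height $3$ there is no skew $3$-frame, Proposition~\ref{f6} is unavailable, and a lattice embedding of a height-$2$ ortholattice into $\Lat(V)$ carries essentially no information about the ring $eRe\cong M_2(D)$; you cannot recover a ring representation of $eRe$ inside $\eta(eR)$ from that data. One can of course represent a simple artinian $*$-regular ring externally (your division-ring construction for height $1$ is fine, and for $M_2(D)$ one must show the involution is adjunction with respect to an anisotropic hermitian form on $D^2$ --- true, but not the one-line ``analogous'' step you suggest), yet those spaces are vector spaces over $D$, not closed subspaces of $V$, so feeding them into Fact~\ref{f10} yields a representation of $R$ in some inner product space rather than in a closed subspace of an ultrapower of $V$. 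To obtain the stated conclusion you either need the paper's cofinality argument or a separate argument embedding these auxiliary spaces into an ultrapower of $V$, neither of which your proposal supplies.
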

\begin{proof}
Let $P$ denote the set of projections $e$ in the minimal ideal $I$
of $R$ such that $\Lat^\perp(eRe)$ has height at least $3$.
 Observe that for any projection
$f \in I$, $fRf$ embeds into $eRe$
for some $e \in P$.
Also, for $e\in P$,
$\Lat^\perp(eRe)$ is
 a section of $\Lat^\perp(R)$
and  any  representation $\eta$ of
$\Lat^\perp(R)$    in some inner product space  $V$
restricts to a representation of
$\Lat^\perp(eRe)$ in a  closed subspace $V_e$ of $V$.
By Lemma~\ref{l2} one obtains a representation
of the $*$-ring  $eRe$ within $V_e$ for each $e \in P$.
By Fact~\ref{f10} this gives rise to a representation
of $R$ in an ultraproduct of the $V_e$,
that is a closed subspace of an ultrapower of $V$.
\end{proof}

Let  $\mathcal{MOL}$ and  $\mathcal{MOL}_{art}$ denote
the ortholattice varieties generated by all respectively
all finite height modular ortholattices.

\begin{cor}\label{t3}
A  $*$-regular ring $R$
is a subdirect product of representables if and only if $\Lat^\perp(R)\in \mathcal{MOL}_{art}$.
\end{cor}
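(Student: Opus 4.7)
The plan is to reduce both implications to the subdirectly irreducible case and apply Corollary~\ref{t2}, using Facts~\ref{f3}, \ref{f7}, and \ref{f11} to pass freely between $R$ and $\Lat^\perp(R)$. The key external input I would need is a two-sided ``bridge'' for modular ortholattices: (i) every subdirectly irreducible MOL in $\mathcal{MOL}_{art}$ has a representation in some inner product space, due to Niemann \cite{nik,nikk}; and (ii) every MOL representable in an inner product space lies in $\mathcal{MOL}_{art}$, by an elementary approximation argument of the kind appearing in \cite{awalg}.

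For $(\Leftarrow)$, I assume $\Lat^\perp(R)\in\mathcal{MOL}_{art}$ and decompose $R$ by Birkhoff's theorem as a subdirect product of its subdirectly irreducible $*$-regular quotients $R/I_i$. By Facts~\ref{f3} and~\ref{f11} each ideal $I_i$ induces an ortholattice congruence $\theta_i$ on $\Lat^\perp(R)$ with $\Lat^\perp(R/I_i)\cong\Lat^\perp(R)/\theta_i$, and by Fact~\ref{f7} this quotient is itself subdirectly irreducible; moreover it lies in $\mathcal{MOL}_{art}$ as a homomorphic image. Input (i) provides a representation of $\Lat^\perp(R/I_i)$ in some inner product space $V_i$, and whenever the height of $\Lat^\perp(R/I_i)$ is at least $3$ Corollary~\ref{t2} upgrades this to a representation of the $*$-ring $R/I_i$ within a closed subspace of an ultrapower of $V_i$. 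The remaining height-$\leq 2$ cases force $R/I_i$ to be the zero ring, a division $*$-ring, or a simple artinian $*$-ring of degree $2$, each representable by direct construction in its own right module equipped with a suitable inner product.

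For $(\Rightarrow)$, I assume $R$ is subdirectly embedded in a product $\prod_i R_i$ of representable $*$-regular rings. Fact~\ref{f13} provides a representation of each $\Lat^\perp(R_i)$ in an inner product space, and input (ii) then places each $\Lat^\perp(R_i)$ in $\mathcal{MOL}_{art}$. The subdirect decomposition of $R$, combined with Facts~\ref{f3} and~\ref{f11}, induces a subdirect embedding $\Lat^\perp(R)\hookrightarrow\prod_i\Lat^\perp(R_i)$ of ortholattices, so $\Lat^\perp(R)\in\mathcal{MOL}_{art}$ by closure of varieties under subdirect products.

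The main obstacle is input (i): the representability of subdirectly irreducible MOLs in $\mathcal{MOL}_{art}$. This is the deep half of the bridge, carrying the full weight of the coordinatization theory with orthogonality conditions developed in \cite{nik}, and it is by no means a direct consequence of anything proved in the excerpt. Input (ii) is by comparison routine---any finitely generated sub-MOL of $\Lat^\perp(V)$ separates its elements by finite-height quotients, so representable MOLs are residually of finite height---and the subdirect-decomposition manipulations on both sides are standard universal algebra.
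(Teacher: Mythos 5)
Your overall architecture matches the paper's: reduce to the subdirectly irreducible case via the correspondence between ring homomorphisms and ortholattice homomorphisms, handle the forward direction with Fact~\ref{f13} plus \cite[Theorem 10.1]{linrep}, and finish the backward direction with Corollary~\ref{t2}. The genuine gap is your ``input (i)'', and you flag it yourself: before Corollary~\ref{t2} can be applied you must know that the subdirectly irreducible ortholattice $\Lat^\perp(R)\in\mathcal{MOL}_{art}$ is representable in some inner product space, and you import this as a black box from \cite{nik,nikk}. The paper does not cite this; it proves it, and that proof is the mathematical content of this direction. The argument runs: one may assume $\Lat^\perp(R)$ has height at least $4$, since otherwise $R$ is simple artinian and representable outright (no detour through the ortholattice, and no need for your separate height-$\leq 2$ case analysis); by J\'{o}nsson's Lemma a subdirectly irreducible member of $\mathcal{MOL}_{art}$ is a homomorphic image of a sub-ortholattice of an ultraproduct of finite-height MOLs, each a direct product of simple ones; the $5$-element chain in $\Lat^\perp(R)$ forces almost all ultraproduct factors to have height at least $4$; simple MOLs of height at least $4$ are orthocomplemented projective geometries, hence representable; and Lemma 8.3 and Corollary 8.6 of \cite{linrep} give the closure of representable MOLs under the operators used. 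Without some such argument the proof is circular in spirit.

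A secondary caution: input (i) as you state it is too strong to be safely quotable. A subdirectly irreducible member of $\mathcal{MOL}_{art}$ of height $3$ is an orthocomplemented projective plane, which need not be Arguesian, so its representability in an inner product space is not automatic; this is precisely why the paper disposes of heights $\leq 3$ on the ring side (simple artinian) and runs the J\'{o}nsson's Lemma argument only from height $4$ up. It does not bite in your application, since $\Lat^\perp(R/I_i)$ is coordinatizable and hence Arguesian, but it means the general statement you lean on cannot be used off the shelf. Your justification of input (ii) (``finitely generated sub-MOLs separate elements by finite-height quotients'') is also looser than what \cite[Theorem 10.1]{linrep} actually establishes, though there the conclusion and the citation target coincide with the paper's.
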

\begin{proof}
Consider a homomorphism $\iota_k$ of $R$ onto $S_k$.
Then $S_k$ is also $*$-regular and 
$\iota_k$ induces a homomorphism $\eta_k$ of $\Lat^\perp(R)$
onto $\Lat^\perp(S_k)$ given by $\eta_k(eR)= \iota_k(e)S_k$, see
Proposition 5.4(iv) \cite{linrep}.
Moreover,  the $\iota_k$ yield a subdirect decomposition
if and only if so do the
$\eta_k$.

Thus, it suffices to consider  subdirectly irreducible $R$;
that is, subdirectly irreducible $\Lat^\perp(R)$.
If $R$ is representable then, by Fact~\ref{f3},
$\Lat^\perp(R)$ is representable, too, and
so in the variety generated by subspace ortholattices
of finite dimensional inner product spaces by \cite[Theorem 10.1]{linrep}, 
whence in $\mathcal{MOL}_{art}$.
For the converse, we may assume that $\Lat^\perp(R)$ 
is of height at least $4$, since otherwise $R$
is simple artinian whence representable. 
Now any finite height modular ortholattice
is a direct product of simple ones and, by  J\'{o}nsson's Lemma, $\Lat^\perp(R)$ is
is a homomorphic image of a sub-ortholattice
of an ultraproduct of such $L_i$.
Since $\Lat^\perp(R)$ contains a 5-element chain,
the ultraproduct may be restricted to be formed from the
$L_i$ of height at least $4$. Such are representable
whence, by Lemma 8.3 and Corollary 8.6 in \cite{linrep}, so is $\Lat^\perp(R)$.
The claim follows by Corollary~\ref{t2}.    
\end{proof}

\begin{cor}
 $\mathcal{MOL}=\mathcal{MOL}_{art}$
if and only if  
 every subdirectly irreducible $*$-regular ring is
representable. 
\end{cor}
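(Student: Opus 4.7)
For the forward direction, I would reason as follows. Assume $\mathcal{MOL} = \mathcal{MOL}_{art}$ and let $R$ be a subdirectly irreducible $*$-regular ring. The projection ortholattice $\Lat^\perp(R)$ is a modular ortholattice, hence automatically belongs to $\mathcal{MOL} = \mathcal{MOL}_{art}$, and Corollary~\ref{t3} then exhibits $R$ as a subdirect product of representable $*$-regular rings $S_k$. Because $R$ has a smallest non-zero ideal, at least one of the canonical surjections $R \to S_k$ in the decomposition must have trivial kernel, embedding $R$ into a representable ring; hence $R$ itself is representable.

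For the converse, I would assume every subdirectly irreducible $*$-regular ring is representable. The inclusion $\mathcal{MOL}_{art} \subseteq \mathcal{MOL}$ is trivial, and by Birkhoff's theorem it suffices to show that every subdirectly irreducible $L \in \mathcal{MOL}$ lies in $\mathcal{MOL}_{art}$. The case of $L$ of finite height is immediate from the definition of $\mathcal{MOL}_{art}$, so my attention is on $L$ of infinite height. My plan is then (i) to coordinatize $L$, producing a $*$-regular ring $R$ with $\Lat^\perp(R) \cong L$; (ii) to transfer subdirect irreducibility from $L$ to $R$, using Facts~\ref{f11}, \ref{f3}, and \ref{f7} to identify the congruence lattices of $L$, of $\Lat(R)$, and of $R$ itself; (iii) to apply the hypothesis to $R$, obtaining a representation of the $*$-ring $R$ in some inner product space $V$; (iv) to convert it, via Fact~\ref{f13}, to an ortholattice representation of $L$ in $V$; and (v) to finish by the ``representable implies artinian'' argument from the proof of Corollary~\ref{t3}, which rests on \cite[Theorem 10.1]{linrep}.

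The hard part will be step (i). For simple $L$ of infinite height, Fact~\ref{f2} supplies a large partial $n$-frame for every $n$, and the coordinatization machinery reviewed in \cite{rep} (cf.\ Fact~\ref{f5}) then yields a regular ring $R_0$ with $\Lat(R_0) \cong L$; transporting the orthocomplementation of $L$ through this isomorphism equips $R_0$ with a $*$-involution making it $*$-regular and satisfying $\Lat^\perp(R_0) \cong L$. The gap is that Fact~\ref{f2} is stated only for simple $L$, whereas here I have $L$ merely subdirectly irreducible. I would bridge this either by showing that an infinite-height subdirectly irreducible modular ortholattice nonetheless supports large partial frames directly (exploiting the arbitrarily long independent chains forced by infinite height together with subdirect irreducibility) or, in the spirit of the proof of Corollary~\ref{t2}, by passing to simple sections of $L$ of sufficient height and assembling a global coordinatization through an ultrapower construction; once (i) is in hand, steps (ii)--(v) are essentially applications of the earlier facts.
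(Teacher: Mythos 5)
Your forward direction is exactly the paper's (it is the ``immediate by Corollary~\ref{t3}'' step), and your outline of steps (ii)--(v) for the converse matches the paper's argument once a coordinatization is in hand. The problem is precisely the gap you flag in step (i), and it cannot be left open: it is the crux of the converse. Reducing via Birkhoff to subdirectly irreducible members of $\mathcal{MOL}$ leaves you with lattices that you cannot coordinatize --- Fact~\ref{f2} (J\'onsson's existence theorem for large partial frames) genuinely requires simplicity, a subdirectly irreducible modular ortholattice of infinite height need not be simple, and neither of your two proposed bridges is carried out or known to work (the first would amount to extending J\'onsson's frame theorem beyond simple lattices; the second does not obviously return a coordinatization of $L$ itself, as opposed to representations of pieces of it).

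The paper sidesteps this entirely by replacing Birkhoff's reduction with the theorem of \cite{hr}: to prove $\mathcal{MOL}\subseteq\mathcal{MOL}_{art}$ it suffices to show $L\in\mathcal{MOL}_{art}$ for each \emph{simple} $L\in\mathcal{MOL}$. For simple $L$ of infinite height, Fact~\ref{f2} together with the coordinatization theorems reviewed in \cite{rep} (Theorems 11.2 and 13.2) gives $L\cong\Lat^\perp(R)$ for some $*$-regular ring $R$; Fact~\ref{f3} makes $R$ simple, hence subdirectly irreducible, the hypothesis makes $R$ representable, Fact~\ref{f13} makes $L$ representable, and \cite[Theorem 10.1]{linrep} places $L$ in $\mathcal{MOL}_{art}$ --- exactly your (ii)--(v). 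So the missing ingredient is the reduction to \emph{simple} members of the variety; without citing that result (or actually proving one of your proposed bridges) the converse direction is incomplete.
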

\begin{proof}
Assume that every subdirectly irreducible $*$-regular
ring is representable.
According to \cite{hr} it suffices  to show
$L\in \mathcal{MOL}_{art}$ for each simple $L\in \mathcal{MOL}$.
Of course, we have to consider $L$ of infinite height, only.
Such $L$ is coordinatizable, that is $L\cong \Lat^\perp(R)$ for
some $*$-regular ring $R$ (cf.
\cite[Section 4.3]{flo}
respectively  Fact~\ref{f2} and Theorems 11.2  and  13.2 in \cite{rep}).  Thus, by Fact~\ref{f3}, $L$ is representable,
whence in $\mathcal{MOL}_{art}$  cf. \cite[Theorem 10.1]{linrep}.
The other direction is immediate by Corollary~\ref{t3}.
\end{proof}

In case of $*$-regular rings $R$ without unit,
$R$ is the directed union of the $eRe$,
 $e$ a projection in $R$, and 
$\Lat^\perp(R)$  the directed union of the
$\Lat^\perp(eRe)$. The latter is a modular \emph{sectional ortholattice} $L$,
a modular lattice with $0$ and a
binary  operation $(a,u) \mapsto a^{\perp_u}$
such that $a  \mapsto a^{\perp_u}$ is an orthocomplementation 
on $[0,u]$ and  $a^{\perp_v}= a^{\perp_u} \cap v$ 
if $v \leq u$. A  \emph{representation} of such $L$
is given by an inner product space $V$ and an embedding $\eta$
of the lattice $L$ with $0$ into $\Lat(V)$
such that, for each $u\in L$,  $\eta(u)$ is closed in $V$ and
the restriction of $\eta$ is a representation
of the ortholattice  $[0,u]$ within $\eta(u)$. 
\begin{cor}
Corollaries~\ref{t2} and \ref{t3} hold for $*$-regular rings without unit,
analogously. 
\end{cor}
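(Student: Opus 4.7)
The plan is to reduce the non-unital case to the unital case by using the directed-union structure noted just before the corollary: $R = \bigcup_{e \in P(R)} eRe$, each corner $eRe$ being a unital $*$-regular ring, and correspondingly $\Lat^\perp(R) = \bigcup_e \Lat^\perp(eRe)$ as a sectional ortholattice, with $\Lat^\perp(eRe)$ identified with the interval $[0, eR]$. By the very definition of a representation of a sectional ortholattice, a representation $\eta$ of $\Lat^\perp(R)$ in an inner product space $V$ restricts, for each projection $e$, to an ortholattice representation $\eta_e$ of $\Lat^\perp(eRe)$ in the closed subspace $\eta(eR) \subseteq V$.

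For the analog of Corollary~\ref{t2}, assume $R$ is subdirectly irreducible (without unit), $\Lat^\perp(R)$ has height at least $3$, and $\eta$ is a representation of $\Lat^\perp(R)$ in $V$. As in the unital case, $R$ has a minimal nonzero ideal $I$, which is simple as a ring (a $*$-regular ring has no nonzero nilpotent ideals), so each corner $eRe = eIe$, for $e \in I$ a projection, is simple unital $*$-regular. Let $P$ be the set of such $e$ with $\Lat^\perp(eRe)$ of height at least $3$; the same cofinality argument as in the proof of Corollary~\ref{t2} shows $P$ is non-empty and cofinal among projections of $I$. For each $e \in P$, Fact~\ref{f2} provides a large partial $3$-frame in the simple $\Lat^\perp(eRe)$, and Theorem~\ref{t1} applied to $\eta_e$ yields a $*$-ring representation $\iota_e$ of $eRe$ in $\eta(eR)$. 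The $\iota_e$ are then assembled by an ultraproduct argument mirroring Fact~\ref{f10} into a $*$-representation of $R$ in a closed subspace of an ultrapower of $V$.

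For the analog of Corollary~\ref{t3}, the forward direction carries over verbatim: a $*$-representation of $R$ yields a sectional-ortholattice representation of $\Lat^\perp(R)$ by applying Fact~\ref{f13} on each interval $[0, eR]$, and the argument through simple finite-height modular ortholattices and \cite[Theorem~10.1]{linrep} places $\Lat^\perp(R)$ in $\mathcal{MOL}_{art}$. For the converse, one reduces to subdirectly irreducible $R$ via the non-unital version of the ideal/congruence correspondence of Fact~\ref{f3} (each ideal $J$ of $R$ being the directed union of its traces $J \cap eRe$ in the unital corners), and then applies the non-unital Corollary~\ref{t2} analog just established.

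The main obstacle is the gluing in the Corollary~\ref{t2} analog, namely the non-unital replacement for Fact~\ref{f8}/Fact~\ref{f10} in which the minimal ideal $I$ may be a proper subset of the non-unital $R$. The intended route is to write $R$ as a homomorphic image of a $*$-regular sub-$*$-ring of an ultraproduct $\prod_U eRe$ over $e \in P$ (with $U$ refining the tail filter on the directed set $P$), mimicking the argument of \cite{simp} for the unital case; the ultraproduct of the $\iota_e$ then yields a $*$-representation of this ultraproduct, which restricts to $R$ and takes values in the corresponding ultraproduct of the spaces $\eta(eR)$. One must verify that the \cite{simp} argument transfers without essential change to the non-unital setting, and that the resulting ultraproduct of closed subspaces sits as a closed subspace of an ultrapower of $V$, so that the final representation genuinely lives in an ultrapower of $V$ as the statement asserts.
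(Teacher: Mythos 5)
The paper gives no proof of this corollary at all --- it is asserted to follow ``analogously'' from the directed-union description of $R$ and the definition of sectional ortholattice given just before it --- so there is nothing to compare step by step; your outline is the natural reading of what ``analogously'' must mean, and its overall architecture (restrict the sectional-ortholattice representation to the corners $[0,eR]=\Lat^\perp(eRe)$, apply the unital Theorem~\ref{t1} to the simple corners $eRe$ with $e$ a projection in the minimal ideal $I$, reassemble by ultraproducts) is the intended one. However, you have correctly located, and then deferred, the one step that does not transfer for free: the non-unital analogue of Fact~\ref{f8}, i.e.\ that $R$ is a homomorphic image of a $*$-regular sub-$*$-ring of an ultraproduct of the corners $eRe$, $e$ ranging over projections of $I$. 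This cannot be replaced by the naive directed-union embedding $R\hookrightarrow\prod_U eRe$ over the tail filter, because even without a unit the minimal ideal $I$ may be a proper ideal of $R$, so $R$ is \emph{not} the directed union of the corners $eIe$; one genuinely needs the approximation argument of \cite{simp} (or a multiplier-type argument realizing $R$ inside a ring built from $I$) to be checked in the non-unital setting, together with the corresponding transfer of representations as in Fact~\ref{f10}. Until that verification is supplied, the proof of the Corollary~\ref{t2} analogue is incomplete --- though admittedly no more incomplete than the paper's own ``analogously.''

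Two smaller points. First, in the Corollary~\ref{t3} analogue the statement itself needs reinterpretation before the argument can ``carry over verbatim'': $\Lat^\perp(R)$ is now a sectional ortholattice, not an ortholattice, so ``$\Lat^\perp(R)\in\mathcal{MOL}_{art}$'' must be read as, say, ``every section $[0,u]$ lies in $\mathcal{MOL}_{art}$'' (equivalently, membership in the corresponding variety of sectional ortholattices); with that reading the forward direction does reduce to Fact~\ref{f13} applied cornerwise and \cite[Theorem 10.1]{linrep}, and the converse reduces to the subdirectly irreducible case as you describe. Second, your justification that $eRe$ is simple for a projection $e\in I$ should be spelled out: from $I\subseteq RfJfR$-type inclusions one gets $eIe\subseteq (eRe)(eJe)(eRe)\subseteq eJe$ for any ideal $J$ of $R$ with $eJe\neq 0$, using regularity to handle the absence of a unit; the throwaway remark about nilpotent ideals does not by itself yield this.
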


\end{document}